\documentclass[]{article}

\usepackage[T1]{fontenc}
\usepackage{amsmath,amssymb,amsthm,mathrsfs}
\usepackage{pxfonts}
\usepackage{sectsty,microtype}
\usepackage[hidelinks]{hyperref}
\allowdisplaybreaks
\sectionfont{\scshape\centering\fontsize{11}{14}\selectfont}
\subsectionfont{\scshape\fontsize{11}{14}\selectfont}

\usepackage{fancyhdr}
\newcommand\shorttitle{BKT transition for \(\mathsf{Sine}_\beta\)}
\newcommand\authors{T. Assiotis}
\newtheorem{thm}{Theorem}[section]
\newtheorem{lem}[thm]{Lemma}
\newtheorem{prop}[thm]{Proposition}
\theoremstyle{definition}

\newtheorem{rmk}[thm]{Remark}
\theoremstyle{plain}

\newcommand{\E}{\mathbb E}
\newcommand{\Prob}{\mathbb P}
\newcommand{\R}{\mathbb R}
\newcommand{\C}{\mathbb C}
\newcommand{\N}{\mathbb N}
\newcommand{\Z}{\mathbb Z}
\newcommand{\Sine}{\mathsf{Sine}}
\newcommand{\dd}{\mathrm d}
\newcommand{\ii}{\mathrm i}
\newcommand{\ee}{\mathrm{e}}
\newcommand{\defeq}{\overset{\mathrm{def}}{=}}
\hypersetup{pdftitle={BKT transition for Sine beta},pdfauthor={Theodoros Assiotis}}

\title{\large\bfseries BKT TRANSITION FOR \(\Sine_\beta\)}
\author{\small THEODOROS ASSIOTIS}
\date{}
\begin{document}
\maketitle

\begin{abstract}
We prove the Berezinskii--Kosterlitz--Thouless (BKT) phase transition in the
large-distance behaviour of the two-point correlation function of the
$\Sine_\beta$ point process for all $\beta>0$, confirming the first term of an asymptotic expansion
predicted by Forrester. 
\end{abstract}

\section{Introduction}\label{sec:introduction}
\subsection{Setting and main result}

The \(\Sine_\beta\) point process describes the universal microscopic behaviour of eigenvalues in the bulk of \(\beta\)-ensembles. The quickest way to define it is via a limit transition as follows: for \(\beta>0\), let \(\mathsf T_1^{(N)},\ldots,\mathsf T_N^{(N)}\) be random angles with joint density
\[
\frac{1}{Z_{N,\beta}}
\prod_{1\le j<k\le N}
\left|\ee^{\mathrm i\theta_j}-\ee^{\mathrm i\theta_k}\right|^\beta
\prod_{j=1}^N\frac{\mathrm d\theta_j}{2\pi},
\qquad
\left(\theta_1,\ldots,\theta_N\right)\in\left[-\pi,\pi\right)^N,
\]
where \(Z_{N,\beta}\) is the normalising constant. Then, we have
\[
\sum_{j=1}^N\delta_{N\mathsf T_j^{\left(N\right)}/\left(2\pi\right)}
\ \overset{\mathrm w}{\longrightarrow}\ \mathsf X_\beta
\]
where the convergence is in distribution for the vague topology and \(\mathsf X_\beta\) is a stationary simple point process of intensity one, called \(\Sine_\beta\), see the work of Killip--Stoiciu \cite{KillipStoiciu} for this limit result.

A construction at the same time as \cite{KillipStoiciu} through the Gaussian \(\beta\)-ensemble and the Brownian carousel is due to Valk\'o--Vir\'ag \cite{ValkoVirag}, building on the random matrix models of Dumitriu--Edelman \cite{DumitriuEdelman}. Universality for a broad class of confining potentials was established by Bourgade--Erd\H{o}s--Yau \cite{BourgadeErdosYau}. Complementary descriptions include its realisation as the spectrum of a random Dirac operator by Valk\'o--Vir\'ag \cite{ValkoViragOperator}, its canonical Dobrushin--Lanford--Ruelle equations proved by Dereudre--Hardy--Lebl\'e--Ma\"ida \cite{DereudreHardyLebleMaida}, and its characterisation as the unique stationary minimiser of the infinite-volume log-gas free energy by Erbar--Huesmann--Lebl\'e \cite{ErbarHuesmannLeble}, see also more recently \cite{AssiotisDLR}.

Understanding the correlation functions of \(\Sine_\beta\) is a longstanding problem. At the classical values \(\beta=1,2,4\), they admit explicit Pfaffian or determinantal formulas; in particular, \(\Sine_2\) is the determinantal process with the sine kernel. We refer to Forrester's monograph \cite{ForresterBook} for these formulas and their random matrix origins. For general \(\beta\), useful representations and precise asymptotic information have proved substantially harder to obtain.

A central question concerns the two-point correlation function at large separation. In 1984, more than two decades before \(\Sine_\beta\)'s rigorous construction, in seminal work Forrester \cite{Forrester1984} predicted a change in its leading asymptotics at \(\beta=2\), through the connection with the Calogero--Sutherland model and Haldane's theory \cite{Haldane} of one-dimensional quantum fluids. At unit intensity, the predicted leading truncated correlation is proportional to \(-r^{-2}\) for \(0<\beta<2\), and to \(r^{-4/\beta}\cos(2\pi r)\) for \(\beta>2\), with the two contributions having equal order at \(\beta=2\). For more about this prediction and allied conjectures see Lewin's survey \cite{Lewin}. This change is referred to as the Berezinskii--Kosterlitz--Thouless (BKT) phase transition for the one-dimensional log-gas, following the seminal work of Berezinskii \cite{Berezinskii} and Kosterlitz--Thouless \cite{KosterlitzThouless} on low-dimensional systems. 

Forrester subsequently developed much of the exact framework underlying this prediction. His works on Selberg integrals, generalised hypergeometric functions and Jack polynomials give correlation formulas at even integer \(\beta\) and access to their asymptotics, see \cite{Forrester1992,Forrester1993,Forrester1994,Forrester1995}. Connections with differential equations were developed further by Forrester--Rains \cite{ForresterRains}, Forrester--Rahman \cite{ForresterRahman} and more recently by Forrester in  \cite{Forrester2021}, see also Chapter~13 of his monograph \cite{ForresterBook}. At rational coupling, important exact formulas were obtained by Ha \cite{Ha} for the Calogero--Sutherland model and by Okounkov \cite{Okounkov} for general log-gas correlation functions, with related integral evaluations due to Forrester--Zuk \cite{ForresterZuk}. 

A probabilistic representation of the pair correlation valid for arbitrary \(\beta>0\) became available only in 2025, in breakthrough work of Qu and Valk\'o \cite{QuValko}. Their formula, expressed through the Hua--Pickrell point process counting function, yields quantitative pointwise decay bounds for all \(\beta>0\), attaining the predicted decay exponent in the upper bound for \(\beta\ge4\). Importantly, it allows for the first time the use of probabilistic and stochastic analytic techniques in studying the two point correlation function, techniques which we will also develop in a different direction in this paper. In a complementary and independent advance, Dumaz--Malvy \cite{DumazMalvy} prove polynomial decay of spatially averaged truncated correlations of every order and for every \(\beta>0\). Finally, at the special parameter $\beta=6$, Qiu--Qu--Yuan--Valk\'o--Venancio \cite{QiuEtAl} recently obtained a completely explicit formula and refined asymptotics for the pair correlation of \(\Sine_6\). In recent joint work with Najnudel, the author obtained a formula for all correlation functions, for every \(\beta>0\), in terms of mixed absolute moments of the Hua--Pickrell stochastic zeta function, see \cite{AssiotisNajnudel}. This random entire function was introduced by Li--Valk\'o \cite{LiValko}, extending the stochastic-zeta framework of Valk\'o--Vir\'ag \cite{ValkoViragZeta}. An earlier construction and analysis in the determinantal case were given by Chhaibi--Najnudel--Nikeghbali \cite{ChhaibiNajnudelNikeghbali}. The formula of \cite{AssiotisNajnudel} separates an explicit Vandermonde factor from a stochastic-zeta expectation that remains continuous and strictly positive at coincident arguments. It is therefore particularly well adapted to fusion asymptotics, where particles approach one another, and immediately gives their leading behaviour. This line of research was further developed by Fang who obtained higher-order, critical and subcritical fusion asymptotics \cite{FangSecond,FangCritical,FangFourth}.

We now state our result precisely. The continuous factorial two-point correlation function \(\boldsymbol\rho_\beta^{(2)}\) (see \cite[Theorem~1]{QuValko} and \cite[Theorem~1.8]{AssiotisNajnudel}) is characterised by
\[
\mathbb E\left[
\sum_{\substack{u,v\in\mathsf X_\beta\\u\ne v}}
f\left(u,v\right)
\right]
=
\int_{\mathbb R^2}
f\left(u,v\right)\boldsymbol\rho_\beta^{\left(2\right)}\left(u,v\right)
\,\mathrm du\,\mathrm dv,
\qquad
0\le f\in\mathsf C_c\left(\mathbb R^2\right).
\]
The truncated two-point correlation function is then defined by
\[
\mathfrak h_\beta\left(r\right)
\defeq
\boldsymbol\rho_\beta^{\left(2\right)}\left(0,r\right)-1.
\]
Our main result determines the leading large-distance asymptotics on both sides of \(\beta=2\), including the exact oscillatory coefficient.

\begin{thm}\label{thm:main} For every fixed \(\beta>0\), as \(r\to+\infty\),
\begin{equation}\label{eq:main}
\mathfrak h_\beta\left(r\right)
=
\left\{
\begin{array}{ll}
-\displaystyle\frac{1}{\pi^2\beta r^2}
+o\left(r^{-2}\right),
&0<\beta<2,\\[8pt]
\displaystyle\frac{\cos\left(2\pi r\right)-1}{2\pi^2r^2},
&\beta=2,\\[8pt]
\displaystyle
\frac{2\Gamma\left(1+2/\beta\right)^2}{\left(2\pi\right)^{4/\beta}}
\frac{\cos\left(2\pi r\right)}{r^{4/\beta}}
+o\left(r^{-4/\beta}\right),
&\beta>2.
\end{array}
\right.
\end{equation}

\end{thm}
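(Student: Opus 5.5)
The plan is to start from the Assiotis--Najnudel representation \cite[Theorem~1.8]{AssiotisNajnudel} of \(\boldsymbol\rho_\beta^{(2)}\). It writes \(\boldsymbol\rho_\beta^{(2)}(0,r)\) as an explicit Vandermonde-type factor \(|1-\ee^{2\pi\ii r\cdot(\cdot)}|^\beta\)-like term, or more usefully in Fourier form, times a normalised expectation \(\E\bigl[|\zeta_\beta(0)|^{\beta}|\zeta_\beta(r)|^{\beta}\bigr]\)-type mixed moment of the Hua--Pickrell stochastic zeta function. I would rewrite this mixed moment as a change of measure: reweighting by \(|\zeta(0)|^\beta\) produces a Hua--Pickrell process with a charge at \(0\). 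The correlation then becomes \(\E^{(0)}\bigl[\exp(\beta\log|\zeta(r)|-\text{normalisation})\bigr]\), with \(\log|\zeta(r)|\) expressed through the counting function of the tilted process, in the spirit of Qu--Valk\'o \cite{QuValko}.

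The key decomposition is into the two competing mechanisms. First, \(\log|\zeta(r)|\) splits into a smooth \(\log r\)-scale part and an oscillatory part carrying the phase \(\ee^{\pm 2\pi\ii r}\) coming from the winding (the counting function mod 1). Second, the smooth part is approximately Gaussian with log-correlated covariance \(\frac{2}{\beta}\log r\) between the fields at \(0\) and \(r\). Expanding the exponential, the zero-frequency Fourier mode contributes the non-oscillatory term. Its coefficient is the second derivative of the log-gas free energy (compressibility), which gives \(-\frac{1}{\pi^2\beta r^2}\). I would verify this by a second-order cumulant expansion, i.e.\ \(\mathrm{Var}(N[0,r])\sim \frac{2}{\pi^2\beta}\log r\) differentiated twice in \(r\).

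For the oscillatory \(\pm1\) Fourier modes, I would compute the asymptotics of the harmonic-type expectation \(\E[\ee^{2\pi\ii N(0,r)}\cdots]\). This is a Fisher--Hartwig-type exponential moment with a jump, which in the Hua--Pickrell/circular-\(\beta\) setting is a Selberg/Morris integral. It yields \(r^{-4/\beta}\) times the constant \(\Gamma(1+2/\beta)^2(2\pi)^{-4/\beta}\); the factor \(2\) comes from the two modes \(\pm1\). I would compute this constant exactly at finite \(N\) via the Morris integral and pass to \(N\to\infty\) using the convergence in \cite{AssiotisNajnudel,LiValko}. Higher modes \(|k|\ge2\) decay like \(r^{-4k^2/\beta}\) and are negligible in both regimes. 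Comparing the exponents \(2\) and \(4/\beta\) then gives the three cases. At \(\beta=2\) I would instead simply use the sine kernel, for which \(1-\bigl(\frac{\sin\pi r}{\pi r}\bigr)^2\) gives the stated formula exactly.

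The main obstacle is the uniformity needed to justify the mode expansion. Specifically, I need error bounds of order \(o(r^{-\min(2,4/\beta)})\) for the remainder after extracting the \(k=0\) second-order term and the \(k=\pm1\) leading terms, uniformly in \(N\), to interchange \(N\to\infty\) with \(r\to\infty\). My first attempt would be to use Riemann--Hilbert/Fisher--Hartwig-type asymptotics with explicit \(N\)-uniform errors for circular \(\beta\)-ensemble Toeplitz-like averages (available via the Jack/Selberg structure). Failing that, I would prove tightness of the tilted counting-function increments by stochastic-analytic (SDE for the Hua--Pickrell phase) moment bounds and then dominated convergence.
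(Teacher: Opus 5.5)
\textbf{There is a genuine gap.} Your outline is the harmonic-fluid/Fisher--Hartwig heuristic behind Forrester's prediction. The steps that actually carry the theorem are asserted rather than proved, and two of them would fail as stated.

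For the non-oscillatory term, the identity $\frac{\dd^2}{\dd r^2}\mathrm{Var}(N[0,r])=2\mathfrak h_\beta(r)$ is exact, but the asymptotic relation $\mathrm{Var}(N[0,r])\sim\frac{2}{\pi^2\beta}\log r$ cannot be differentiated twice. The variance sees $\mathfrak h_\beta$ only through a double integral, in which a term $r^{-4/\beta}\cos(2\pi r)$ contributes only $O(r^{-4/\beta})$. So the logarithmic variance law cannot even tell whether $r^{-2}$ or $r^{-4/\beta}\cos(2\pi r)$ dominates $\mathfrak h_\beta$, let alone give its pointwise asymptotics. You need a pointwise expansion with remainder $o(r^{-2})$, and a log-correlated Gaussian approximation of the smooth part of $\log|\zeta|$ carries no information at that precision.

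For the oscillatory term, the Morris integral evaluates averages with a single Fisher--Hartwig singularity, whereas the pair correlation involves two singularities at separation $r$. The factorised large-$r$ asymptotics with constant $\Gamma(1+2/\beta)^2(2\pi)^{-4/\beta}$ is exactly the general-$\beta$ statement at stake, and neither tool you name reaches it. Riemann--Hilbert methods need the determinantal structure of $\beta=2$, and the Jack/Selberg structure gives exact formulas only at special values of $\beta$ (even, or rational), not uniformly in $\beta$. Finally, the $N$-uniform $o(r^{-\min(2,4/\beta)})$ error control, together with the summation over all modes $|k|\ge2$, is exactly what you flag as the main obstacle and leave open. It is essentially the whole difficulty.

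The paper never passes through finite $N$ and never computes the constant directly. Your starting representation is compatible with it: the footnote in the introduction shows that the Assiotis--Najnudel formula combined with the Li--Valk\'o SDE produces a hierarchy of the same type. The proof itself uses the Qu--Valk\'o expansion $\mathcal R_a(x)=1+2\sum_{n\ge1}t_n(a)\Re\mathfrak q_n(x;a)$, whose Fourier moments satisfy the closed system \eqref{eq:q-hierarchy}. The ingredients missing from your plan are the following.

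First, the uniform bound $\sup_{x\ge1}\sup_n(1+|n|)^s|\mathfrak q_n(x;a)|<\infty$ of Lemma~\ref{lem:uniform-modes}. It is obtained by iterated Gaussian integration by parts along a Cameron--Martin shift, and it controls both the infinite system and the mode sum.

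Second, subtraction of the algebraic terms $\mathbf s_1/x+\mathbf s_2/x^2+\mathbf s_3/x^3$, followed by a Harris--Lutz correction. After this, mode $n$ is damped like $x^{-pn^2}$ with $p=4/\beta$, which is the rigorous form of your $r^{-4k^2/\beta}$. For $\beta<2$ the coefficient $-p$ of $x^{-2}$ comes from $\mathbf s_2$ at modes $1$ and $2$, since $\mathbf s_1$ is purely imaginary. So the ``compressibility'' term is an algebraic term forced by $\mathfrak q_0=1$, not a variance derivative.

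Third, for $\beta>2$ the ODE analysis yields only the existence of the amplitude $\mathfrak U(a)$. Its value is fixed by three further steps: Forrester's even-$\beta$ asymptotics at $a=m\ge4$; a holomorphic extension of $\psi_0(a)\mathfrak U(a)$ to $\Re a>1$ with polynomial growth on $\Re a>3$; and Blaschke uniqueness from the zeros at the integers $m\ge4$.
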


Obtaining a complete large-distance expansion of the second correlation function for arbitrary \(\beta>0\) (of course for even integer $\beta$ this and much more has already been proven in the works of Forrester), or precise pointwise asymptotics for higher correlation functions, remains open and very interesting. We believe that developing the asymptotic methods presented here could provide a starting point.

Our proof analyses the absolutely convergent series representation
of $\mathfrak h_\beta(r)$ obtained by Qu--Valk\'o
\cite{QuValko}, whose Fourier moments satisfy an infinite coupled
system of ordinary differential equations.
A related moment formulation follows from the author's formula
with Najnudel \cite{AssiotisNajnudel}, combined with
Li--Valk\'o's SDE representation of the Hua--Pickrell stochastic
zeta function \cite{LiValko}, and can be studied by the same
asymptotic method.\footnote{With the notation of \eqref{eq:normalisation},
\cite[Theorem~1.8]{AssiotisNajnudel} gives the first equality below:
\[
\begin{aligned}
\mathcal R_a(x)
&=\mathfrak c_a x^{2a}
  \E\left[\left|\mathsf Z^{2a,2a}(x)\right|^{2a}\right]=\mathfrak c_a   \left(\displaystyle\sum_{n\in\Z}t_{|n|}(a)^2\right)^{-1}
 \displaystyle\sum_{n\in\Z}
                  t_{|n|}(a)^2\mathcal P_n(x;a).
\end{aligned}
\]
Here $\mathsf Z^{2a,2a}$ is the Hua--Pickrell stochastic zeta
function, normalised by $\mathsf Z^{2a,2a}(0)=1$,
$\mathfrak c_a=a^{2a}\Gamma(a+1)^3/
[\Gamma(2a+1)\Gamma(3a+1)]$, and $t_n(a)$ is given by
\eqref{eq:correlation-weights}, with $t_0(a)=1$.
For the second equality, \cite[Theorem~22]{LiValko}, with
$\beta=\delta=2a$, gives in distribution
$\mathsf Z^{2a,2a}{=}
\mathsf A_0-\mathsf Q\mathsf B_0$,
where $(\mathsf A_u,\mathsf B_u)$ solves the SDE there and
$\mathsf Q$ is an independent real random variable with density
proportional to $(1+q^2)^{-2a-1}$.
Write
$\mathsf A_0(x)-\ii\mathsf B_0(x)
=\mathsf\rho(x)\ee^{\ii\mathsf\vartheta(x)/2}$ and put
$\mathcal P_n(x;a)= x^{2a}
\E[\mathsf\rho(x)^{2a}\ee^{\ii n\mathsf\vartheta(x)}]$.
Averaging over $\mathsf Q$ gives the stated Fourier series,
while It\^o's formula gives
\[
\partial_x\mathcal P_n
=\left(\ii n-\frac{2n^2}{ax}\right)\mathcal P_n
+\frac{(n+a)^2}{ax}\mathcal P_{n-1}
+\frac{(n-a)^2}{ax}\mathcal P_{n+1},
\qquad n\in\Z.
\]
This system has the same diagonal oscillation and damping as
\eqref{eq:q-hierarchy}, and the subsequent steps of our
asymptotic analysis can be adapted to it.}
We introduce an explicit correction that removes the
off-diagonal interactions of order $x^{-1}$ while retaining
the diagonal damping, which separates the decay rates of
the different Fourier modes.
The principal probabilistic input is a uniform decay estimate
in the Fourier index which provides the control of
large indices needed to justify the analysis of the infinite
system and the summation of its asymptotics.
For $\beta>2$, the resulting expansion contains an initially
unknown oscillatory coefficient. We determine it by holomorphic continuation, starting from its
values at even integer $\beta$, supplied by Forrester's asymptotics
\cite{ForresterBook}. The identification of the Selberg expression
with the pair correlation follows from the author's work with
Najnudel \cite[Proposition~2.15 and (21)]{AssiotisNajnudel}.
We give a more explicit outline in the next subsection.

\subsection{On the proof}\label{sec:strategy}

We use the normalisation
\begin{equation}\label{eq:normalisation}
 a\defeq\frac\beta2,\qquad p\defeq\frac2a,\qquad
 \mathcal R_a\left(x\right)\defeq
 1+\mathfrak h_{2a}\left(\frac{x}{2\pi}\right).
\end{equation}
 $\mathcal R_a$ is expressed in the work of Qu-Valk\'{o} \cite{QuValko}  as an absolutely convergent sum of
Fourier moments $\mathfrak q_n(x;a)$ of a certain Hua--Pickrell point process phase diffusion,
\begin{equation*}
 \mathcal R_a\left(x\right)
 =1+2\sum_{n=1}^\infty
       t_n\left(a\right)\Re\mathfrak q_n\left(x;a\right).
\end{equation*}
and \(t_n(a)\) is an explicit combinatorial factor defined in Section~\ref{sec:representation}.
The differential system the $\mathfrak q_n(x;a)$ satisfy which is key for our analysis is given by 
\begin{equation*}
 \partial_x\mathfrak q_n\left(x;a\right)
 =\left(\ii n-\frac{pn^2}{x}\right)\mathfrak q_n\left(x;a\right)
   +\frac{n\left(n+a\right)}{ax}\mathfrak q_{n-1}\left(x;a\right)
   +\frac{n\left(n-a\right)}{ax}\mathfrak q_{n+1}\left(x;a\right),
 \quad n\in\Z.
\end{equation*}
while their probabilistic representation is given in
Section~\ref{sec:representation}. 

Our first
task is to obtain the following expansion, with an initially unknown
oscillatory coefficient $\mathfrak U(a)$:
\[
 \mathcal R_a\left(x\right)=
 \begin{cases}
  1-px^{-2}+o\left(x^{-2}\right),&0<a<1,\\[3pt]
  1+2\Re\left(\mathfrak U\left(a\right)\ee^{\ii x}\right)x^{-p}
       +o\left(x^{-p}\right),&a>1.
 \end{cases}
\]
This is done in Proposition~\ref{prop:real-asymptotics}. Let us explain how.

The key input from the probabilistic representation of $\mathfrak q_n$ is in the following. The moment equations have diagonal terms $\ii n-pn^2/x$ and interactions
with neighbouring indices of order $x^{-1}$. To control the infinite
sum, Lemma~\ref{lem:uniform-modes} uses repeated Gaussian integration
by parts along a deterministic shift of one driving Brownian motion
to prove, for every $s\ge0$, the uniform estimate which will be used as seed in many subsequent estimates
\[
 \sup_{x\ge1}\sup_{n\in\Z}
 \left(1+\left|n\right|\right)^s
 \left|\mathfrak q_n\left(x;a\right)\right|<\infty.
\]

With this key estimate at hand we then analyse asymptotically the differential equations directly with elementary methods. We first subtract a finite algebraic approximation to the moments.
Writing $\boldsymbol{\mathfrak q}$ for the vector with coordinates
$\mathfrak q_n$, set
\[
 \mathbf Q\left(x;a\right)\defeq
 \mathbf e_0+\frac{\mathbf s_1\left(a\right)}x
 +\frac{\mathbf s_2\left(a\right)}{x^2}
 +\frac{\mathbf s_3\left(a\right)}{x^3},
 \qquad
 \boldsymbol{\mathfrak r}\defeq
 \boldsymbol{\mathfrak q}-\mathbf Q.
\]
Here $\mathbf e_0$ is the unit vector at index zero, and the
finitely supported coefficients $\mathbf s_j$ are determined
successively from the hierarchy. The residual $\mathfrak r$ has zeroth coordinate
zero and satisfies the same homogeneous system with an explicit
forcing $\mathbf g(a)/x^4$, where $\mathbf g(a)$ has finite support.
The third-order term is included to obtain this order of forcing but
only $\mathbf s_1$ and $\mathbf s_2$ need to actually be computed.

We next correct the residual to remove its leading order interaction.
In vector form the correction satisfies
\[
 \begin{gathered}
 \boldsymbol{\mathfrak w}\defeq\boldsymbol{\mathfrak r}
 +\frac{\mathbf T\left(a\right)\boldsymbol{\mathfrak r}}x,
 \\
 \boldsymbol{\mathfrak w}'
 =\left(\ii\mathcal N-\frac{p\mathcal N^2}{x}\right)
       \boldsymbol{\mathfrak w}
 +\frac{\mathbf F\left(a\right)\boldsymbol{\mathfrak r}}{x^2}
 +\frac{\mathbf g\left(a\right)}{x^4}
 +\frac{\mathbf T\left(a\right)\mathbf g\left(a\right)}{x^5}.
 \end{gathered}
\]
Here $\mathcal N$ multiplies the $n$th coordinate by $n$, and the
matrices $\mathbf T$ and $\mathbf F$ involve only finitely many
neighbours in each row. Their coefficients grow at most quadratically
and quartically in $n$, respectively. The scalar solution factor is
\[
 \ee^{\ii n\left(x-t\right)}
 \left(\frac tx\right)^{pn^2},\qquad 1\le t\le x.
\]
Its modulus $(t/x)^{pn^2}$ describes the decay of the
homogeneous solutions, with faster decay for larger mode indices.

The distinction between $p<2$ and $p>2$ occurs in this corrected
equation. When $p<2$, variation of constants, together with the uniform
bounds above, first gives $\mathfrak r_n=O(x^{-p/2})$
with every fixed polynomial weight in the mode index. The first-mode integrating factor then produces
an absolutely integrable forcing. Consequently
$\ee^{-\ii x}x^p\mathfrak w_1(x;a)$ converges, with limit
$\mathfrak U(a)/t_1(a)$. Recovering $\mathfrak q_1$ adds a purely
imaginary term of order $x^{-1}$, which vanishes from the real
correlation, and smaller terms. Higher modes are negligible at
order $x^{-p}$. When $p>2$, every nonzero homogeneous mode decays
faster than $x^{-2}$. Successive damping estimates improve the
residual bound from $O(x^{-1})$ to $O(x^{-2})$ and then to
$o(x^{-2})$, uniformly in the mode index. This gives the
second-order expansion with coefficients $\mathbf s_1$ and
$\mathbf s_2$. The first coefficient is purely imaginary.
Only the first two positive modes contribute to the coefficient
$-p$, and the uniform remainder can be summed against the
correlation weights.

It remains to identify the oscillatory coefficient. For integers
$m\ge4$, the convergence and uniform bound in
\cite[Proposition~2.15 and (21)]{AssiotisNajnudel} identify
Forrester's Selberg integral with the pair correlation.
Forrester's asymptotics
\cite[Proposition~13.2.4, pp.~604--605]{ForresterBook} then give
\[
 \mathfrak U\left(m\right)=\Gamma\left(1+1/m\right)^2.
\]
We extend the identity from integer parameters to all real $a>1$
by proving holomorphic continuation and polynomial growth of
$\psi_0(a)\mathfrak U(a)$ in Lemma~\ref{lem:amplitude-continuation}.
Here $\psi_n(a)$ are the Fourier coefficients of
$|\sin(\theta/2)|^{2a}$. Weighting the moments by $\psi_n(a)$
allows us to control finite approximations of their hierarchy. These bounds make the
first-mode amplitude converge locally uniformly in the complex
parameter and yield the required polynomial growth.
After division by a sufficiently large power of $a+1$, the
difference from $\psi_0(a)\Gamma(1+1/a)^2$ is bounded and
holomorphic on $\Re a>3$. Its zeros at every integer $m\ge4$
violate the Blaschke condition unless it vanishes identically.
Analytic continuation therefore gives
$\mathfrak U(a)=\Gamma(1+1/a)^2$ for all real $a>1$.
Together with Proposition~\ref{prop:real-asymptotics} and the
sine-kernel formula at $a=1$, this proves the main theorem.

\paragraph{AI use statement} AI tools (ChatGPT 5.6 Sol, 6) assisted the technical implementation of the proof extensively following a general strategy and ideas of the author. The individual proofs of intermediate results have, in one case (Lemma \ref{lem:uniform-modes}) been reworked completely  and, in general have all been verified, simplified and modified for clarity by the author over the past months. AI tools (ChatGPT 6) have also been used for literature review, proofreading and general editing of the manuscript.

\section{The Hua--Pickrell phase representation}\label{sec:representation}

Throughout this section $a>0$ and $x>0$. We recall the phase
representation and moment equations of Qu--Valk\'o, which are the
starting point of the proof. Let
$\mathsf W_{\C}=\mathsf W_1+\ii\mathsf W_2$, where $\mathsf W_1$ and
$\mathsf W_2$ are independent two-sided standard Brownian motions.
Consider the strong solution with entrance from $0$ at time $-\infty$
\begin{equation}\label{eq:phase-sde}
 \begin{aligned}
 \dd\mathsf\alpha_u\left(x;a\right)
 &=\left(\frac{ax}{2}\ee^{au/2}
              -a\sin\mathsf\alpha_u\left(x;a\right)\right)\dd u
   +\Re\left[\left(\ee^{-\ii\mathsf\alpha_u\left(x;a\right)}-1\right)
                 \dd\mathsf W_{\C}\left(u\right)\right],\\
 \lim_{u\to-\infty}\mathsf\alpha_u\left(x;a\right)&=0.
 \end{aligned}
\end{equation}
The equation is interpreted forward on each finite interval in
$(-\infty,0]$, with the filtration generated by past Brownian
increments. Existence and uniqueness of this  solution follow
from \cite[Proposition~6 and Theorem~1]{QuValko}; see also
\cite{LiValko} for the underlying Hua--Pickrell construction.
Define its Fourier moments by
\begin{equation}\label{eq:phase-moments}
 \mathfrak q_n\left(x;a\right)\defeq
 \E\left[\ee^{\ii n\mathsf\alpha_0\left(x;a\right)}\right],
 \qquad n\in\Z.
\end{equation}
For $n\ge1$, let $(z)_n=\prod_{j=0}^{n-1}(z+j)$ be the rising factorial,
with $(z)_0=1$, and put
\begin{equation}\label{eq:correlation-weights}
 t_n\left(a\right)\defeq\frac{\left(-a\right)_n}{\left(1+a\right)_n}.
\end{equation}
Qu--Valk\'o's formula \cite[Theorem~1]{QuValko}, expressed at unit
intensity, is
\begin{equation}\label{eq:correlation-mode-sum}
 \mathcal R_a\left(x\right)
 =1+2\sum_{n=1}^\infty
       t_n\left(a\right)\Re\mathfrak q_n\left(x;a\right).
\end{equation}
Their intensity is $1/(2\pi)$, so their two-point function at $(0,x)$
is $\mathcal R_a(x)/(4\pi^2)$. The bound
$|t_n(a)|\le C_a(1+n)^{-2a-1}$ makes the sum absolutely convergent.
At an integer $a=m\ge1$, its weights vanish for $n>m$.

The elementary moment bounds and entrance values are
\begin{equation}\label{eq:physical-mode-bounds}
 \mathfrak q_0\left(x;a\right)=1,\qquad
 \left|\mathfrak q_n\left(x;a\right)\right|\le1,\qquad
 \mathfrak q_{-n}\left(x;a\right)
       =\overline{\mathfrak q_n\left(x;a\right)},\qquad
 \lim_{x\downarrow0}\mathfrak q_n\left(x;a\right)=1.
\end{equation}
The moment equations of \cite[Proposition~20]{QuValko}, specialised
to $\beta=2a$ and the Hua--Pickrell parameter $\delta=a$, read
\begin{equation}\label{eq:q-hierarchy}
 \partial_x\mathfrak q_n\left(x;a\right)
 =\left(\ii n-\frac{pn^2}{x}\right)\mathfrak q_n\left(x;a\right)
   +\frac{n\left(n+a\right)}{ax}\mathfrak q_{n-1}\left(x;a\right)
   +\frac{n\left(n-a\right)}{ax}\mathfrak q_{n+1}\left(x;a\right),
 \quad n\in\Z.
\end{equation}
They follow by applying It\^o's formula to the angular exponential and
using the scaling in law
$$\mathsf\alpha_u(x;a)\stackrel{\mathrm d}{=}
\mathsf\alpha_0(x\ee^{au/2};a).$$
In particular, the zero-coordinate equation is identically zero.
We analyse this hierarchy for real parameters in the next section.

\section{Large-distance asymptotics for real parameters}\label{sec:real-asymptotics}
\subsection{A uniform Fourier bound}

We first obtain the decay in the mode index needed to justify the changes of variables and the summations in the asymptotic analysis.
\begin{lem}\label{lem:uniform-modes}
For every \(a>0\) and \(s\ge0\),
\[
\sup_{x\ge1}\sup_{n\in\Z}\left(1+\left|n\right|\right)^s
\left|\mathfrak q_n\left(x;a\right)\right|<\infty.
\]
\end{lem}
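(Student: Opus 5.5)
The plan is to prove that the law of $\mathsf\alpha_0(x;a)$ has a smooth density on the circle, with derivatives bounded uniformly in $x\ge1$. The Fourier coefficients $\mathfrak q_n$ then decay faster than any polynomial, uniformly in $x$. Concretely, for each integer $k\ge1$ we aim at a Malliavin-type integration by parts
\[
 \E\left[\varphi^{(k)}\left(\mathsf\alpha_0\left(x;a\right)\right)\right]
 =\E\left[\varphi\left(\mathsf\alpha_0\left(x;a\right)\right)\mathsf H_k\left(x\right)\right],
 \qquad \sup_{x\ge1}\E\left|\mathsf H_k\left(x\right)\right|<\infty,
\]
for smooth $2\pi$-periodic $\varphi$. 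Taking $\varphi(\theta)=\ee^{\ii n\theta}$ gives $|n|^k|\mathfrak q_n|\le\E|\mathsf H_k|$. Together with the bound $|\mathfrak q_n|\le1$ from \eqref{eq:physical-mode-bounds}, this yields the lemma for $s=k$, and by monotonicity for all $s\ge0$.

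To produce $\mathsf H_k$ we use a deterministic Cameron--Martin shift, not the full Malliavin calculus. We work on a fixed finite window $u\in[-1,0]$, conditioning on the past $\mathcal F_{-1}$; the entrance at $-\infty$ only enters through the starting value $\mathsf\alpha_{-1}$. We shift $\mathsf W_1$ by $\varepsilon h(u)$, with $h$ smooth and supported in $[-1,0]$. The derivative process $\mathsf D_u=\partial_\varepsilon\mathsf\alpha_u|_{\varepsilon=0}$ solves a linear SDE,
\[
 \dd\mathsf D_u=-a\cos\mathsf\alpha_u\,\mathsf D_u\,\dd u+\Re\left[-\ii\ee^{-\ii\mathsf\alpha_u}\mathsf D_u\,\dd\mathsf W_{\C}\right]+\left(\cos\mathsf\alpha_u-1\right)h'\left(u\right)\dd u .
\]
It therefore has an explicit stochastic-exponential representation. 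Girsanov's formula then gives
\[
 \E\left[\varphi'\left(\mathsf\alpha_0\right)\mathsf D_0\right]=\E\left[\varphi\left(\mathsf\alpha_0\right)\int h'\,\dd\mathsf W_1\right].
\]
Note that the drift $\frac{ax}{2}\ee^{au/2}$ does not depend on $\mathsf\alpha$. It therefore drops out of the equation for $\mathsf D$, and all the coefficients are bounded uniformly in $x$; this is where uniformity in $x$ should come from. To divide by $\mathsf D_0$ we cannot simply use one direction, since $\cos\alpha-1$ vanishes at $\alpha\equiv0$. We would therefore use both $\mathsf W_1$ and $\mathsf W_2$ shifts, whose noise coefficients are $\cos\alpha-1$ and $\sin\alpha$ and have no common zero except $\alpha\in2\pi\Z$. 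We then form the scalar Malliavin covariance $\sigma=\int_{-1}^0 |Y_0Y_u^{-1}|^2|\ee^{-\ii\alpha_u}-1|^2\dd u$, where $Y$ is the first-variation flow. The standard iteration gives $\mathsf H_k$ as a polynomial in $\sigma^{-1}$, Skorokhod integrals and higher variations. All of these have moments bounded by constants depending only on $a$, except $\sigma^{-1}$.

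The main obstacle is therefore a uniform-in-$x$ bound on $\E[\sigma^{-q}]$, that is, showing the phase does not spend the whole window $[-1,0]$ near $2\pi\Z$. Here the large $x$ helps. For $x\ge1$ the deterministic drift is at least $\frac{a}{2}\ee^{-a/2}>0$ on $[-1,0]$, while the restoring drift $-a\sin\alpha$ and the noise both vanish at the multiples of $2\pi$. Near such a point the phase is pushed off linearly in time, so the occupation time of a $\delta$-neighbourhood of $2\pi\Z$ during $[-1,0]$ is $O(\delta)$ with overwhelming probability. We would make this precise by comparison: on $\{|\alpha_u-2\pi j|<\delta\}$ the drift exceeds $c_a-a\delta$ and the diffusion coefficient is $O(\delta)$. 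An exponential martingale estimate then gives $\Prob(\sigma<\epsilon)\le C_q\epsilon^q$ uniformly in $x\ge1$ and in the starting point. Taking conditional expectations given $\mathcal F_{-1}$ and then full expectation completes the argument.
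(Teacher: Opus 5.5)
Your architecture is the paper's. You reduce to the window $[-1,0]$ via the Markov property at $u=-1$, with bounds uniform in the starting phase. You differentiate along a Cameron--Martin shift of $\mathsf W_1$ and note that $\frac{ax}{2}\ee^{au/2}$ drops out of every variational equation. Everything then reduces to uniform inverse moments of $\mathsf I=\int_{-1}^0(1-\cos\mathsf\alpha_u)\,\dd u$; your $\sigma$ equals $2\int(\mathsf J_0/\mathsf J_u)^2(1-\cos\mathsf\alpha_u)\,\dd u\ge2\mathsf I/\mathsf J_*^4$.

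One remark on the set-up. Adding the $\mathsf W_2$ direction buys no nondegeneracy, because $\cos\alpha-1$ already vanishes only on $2\pi\Z$, the common zero set. The actual problem with one direction is different: if $h$ vanishes at both ends of the window, $h'$ changes sign, so $\mathsf J_0\int\mathsf J_u^{-1}(\cos\mathsf\alpha_u-1)h'(u)\,\dd u$ can cancel. The paper takes $h(u)=u+1$, which needs no condition at $u=0$. Then $\partial_\lambda\mathsf\alpha_0=-\mathsf J_0\int(1-\cos\mathsf\alpha_v)\mathsf J_v^{-1}\,\dd v$ has a fixed sign. This gives a scalar iteration with the explicit density $\ee^{\lambda\mathsf Z-\lambda^2/2}$ and no Skorokhod integrals. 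Your full Malliavin covariance also works, just more heavily.

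The genuine gap is in the step you yourself call the main obstacle. As written, your local argument is not uniform in $x$. With the $x$-independent drift bound $\frac a2\ee^{-a/2}$, each passage through the $\delta$-neighbourhood $N_\delta$ of a point of $2\pi\Z$ costs time $O(\delta)$. But for large $x$ the phase makes of order $x(1-\ee^{-a/2})/(2\pi)$ such passages. You therefore only get occupation $O(x\delta)$, and the resulting bound on $\Prob(\sigma<\epsilon)$ degenerates as $x\to\infty$.

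A repair needs three things. You must use the drift $\frac{ax}{2}\ee^{au/2}$ itself during each passage. You must bound the number of passages by the total increment $x(1-\ee^{-a/2})+O(1)+|\mathsf M_0|$. And you must control back-tracking re-entries, for instance through an It\^o--Tanaka identity for $F(\alpha)=|N_\delta\cap[0,\alpha]|$. This is doable, but it is precisely the missing content.

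The paper avoids local analysis altogether. With $\vartheta$ the deterministic phase, it writes $\mathsf\alpha_u-\vartheta_u=-a\int\sin\mathsf\alpha+\lambda\int(\cos\mathsf\alpha-1)+\mathsf M_u$. It then uses $|\int\sin\mathsf\alpha|\le\sqrt{2\mathsf I}$ and the fact that $1-\cos$ is $1$-Lipschitz to get $d_a\le2\mathsf I+a\sqrt{2\mathsf I}+\sup|\mathsf M|$, with $[\mathsf M]_0=2\mathsf I$. Here $d_a=\inf_{x\ge1,\theta}\int(1-\cos\vartheta_u)\,\dd u>0$, by compactness for bounded $x$ and an $O(1/x)$ oscillatory-integral bound for large $x$. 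The exponential martingale inequality then gives $\Prob(\mathsf I\le\epsilon)\le2\ee^{-d_a^2/(16\epsilon)}$ uniformly. All the uniformity in $x$ is carried by the deterministic quantity $d_a$.
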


\begin{proof}
The main step of the proof is the repeated Gaussian integration by
parts at its end, which produces a factor $n^{-1}$ at each iteration
and hence arbitrary decay in the mode index. The preceding estimates
provide the uniform moment bounds needed to justify this argument.
We perturb one of the driving Brownian motions in a fixed direction
and control the resulting derivatives of the terminal phase, together
with inverse moments of its first derivative. The key intuition is
that, although the diffusion coefficients vanish at multiples of
$2\pi$, the deterministic forcing, for $x\ge1$, prevents the phase
from spending too long near these points. Moreover, the contributions
to the first derivative under our chosen perturbation have the same
sign and therefore do not cancel. These two facts give the
quantitative non-degeneracy needed for integration by parts, with
bounds uniform in $x\ge1$.

\textbf{Step 1. Reduction to a finite interval and an occupation estimate.} Fix $a>0$, $s\ge0$, and an integer $m\ge\max(1,s)$.
We work on the canonical space
\[
\Omega\defeq
\left\{w\in C\left([-1,0];\R^2\right):
w\left(-1\right)=0\right\},
\]
equipped with two-dimensional Wiener measure $\Prob$ and expectation
$\E$. Its coordinate processes $\mathsf W_1,\mathsf W_2$ are independent
Brownian motions started from zero at time $-1$. They represent the
Brownian increments on the last interval $[-1,0]$ of \eqref{eq:phase-sde}.
For $x\ge1$ and $\lambda,\theta\in\R$, let
$\mathsf\alpha_u^{\lambda,\theta}(x;a)$ be the strong solution of
\begin{equation}\label{eq:shifted-phase-sde}
\begin{aligned}
\dd\mathsf\alpha_u^{\lambda,\theta}
={}&\left[
\frac{ax}{2}\ee^{au/2}
-a\sin\left(\mathsf\alpha_u^{\lambda,\theta}\right)
+\lambda\left(\cos\left(\mathsf\alpha_u^{\lambda,\theta}\right)-1\right)
\right]\dd u
\\
&+\left(\cos\left(\mathsf\alpha_u^{\lambda,\theta}\right)-1\right)
\dd\mathsf W_{1,u}
+\sin\left(\mathsf\alpha_u^{\lambda,\theta}\right)\dd\mathsf W_{2,u},
\qquad \mathsf\alpha_{-1}^{\lambda,\theta}=\theta.
\end{aligned}
\end{equation}
At $\lambda=0$ this is the phase evolution \eqref{eq:phase-sde} started from $\theta$ at
time $-1$. Put
\[
\mathfrak K_{x,n}\left(\theta;a\right)\defeq
\E\left[\ee^{\ii n\mathsf\alpha_0^{0,\theta}\left(x;a\right)}\right].
\]
The Markov property of the entrance solution in \eqref{eq:phase-sde}
gives
\[
\mathfrak q_n\left(x;a\right)
=\E\left[
\mathfrak K_{x,n}\left(\mathsf\alpha_{-1}\left(x;a\right);a\right)
\right],
\]
where the expectation in this display is under the law of the
entrance solution. It suffices to prove
$|\mathfrak K_{x,n}(\theta;a)|\le C_{a,m}|n|^{-m}$ uniformly in
$x\ge1$ and $\theta\in\R$, for $n\ne0$.
In the local calculations below we write $\mathsf\alpha_u^\lambda$
for $\mathsf\alpha_u^{\lambda,\theta}(x;a)$, suppressing $x,a,\theta$.
Unless otherwise stated, bounds are uniform for $x\ge1$,
$\theta\in\R$ and $|\lambda|\le1$; constants may depend on $a$
and on the indicated moment and derivative orders.

We first control the integral
\[
\mathsf I_\lambda\defeq
\int_{-1}^0
\left[1-\cos\left(\mathsf\alpha_u^\lambda\right)\right]\dd u.
\]
The deterministic phase
\[
\vartheta_u\defeq
\theta+x\left(\ee^{au/2}-\ee^{-a/2}\right)
\]
satisfies
\begin{equation}\label{eq:deterministic-phase-rotation}
d_a\defeq
\inf_{x\ge1,\,\theta\in\R}
\int_{-1}^0\left[1-\cos\left(\vartheta_u\right)\right]\dd u>0.
\end{equation}
Indeed, every integral is positive, since $\vartheta$ is strictly
increasing. Continuity and periodicity in $\theta$ give a positive
lower bound when $x$ ranges over a compact interval. For large $x$,
the change of variables $t=\ee^{au/2}$ and integration by parts give
\[
\left|\int_{-1}^0\ee^{\ii x\ee^{au/2}}\,\dd u\right|
=\frac2a\left|\int_{\ee^{-a/2}}^1\frac{\ee^{\ii xt}}t\,\dd t\right|
\le\frac{4\ee^{a/2}}{ax}.
\]
Thus the integral of $1-\cos\vartheta_u$ is at least $1/2$ for all
sufficiently large $x$, uniformly in $\theta$.

The martingale
\[
\mathsf M_u^\lambda\defeq
\int_{-1}^u\left(\cos\left(\mathsf\alpha_v^\lambda\right)-1\right)
\dd\mathsf W_{1,v}
+\int_{-1}^u\sin\left(\mathsf\alpha_v^\lambda\right)
\dd\mathsf W_{2,v}
\]
starts from zero at time $-1$ and has quadratic variation
$[\mathsf M^\lambda]_0=2\mathsf I_\lambda$.
Since $\sin^2 y\le2(1-\cos y)$, the integral equation gives
\[
\sup_{-1\le u\le0}\left|\mathsf\alpha_u^\lambda-\vartheta_u\right|
\le a\sqrt{2\mathsf I_\lambda}
+\left|\lambda\right|\mathsf I_\lambda
+\sup_{-1\le u\le0}\left|\mathsf M_u^\lambda\right|.
\]
The function $1-\cos$ is Lipschitz with constant one. Hence
\[
d_a\le
2\mathsf I_\lambda+a\sqrt{2\mathsf I_\lambda}
+\sup_{-1\le u\le0}\left|\mathsf M_u^\lambda\right|.
\]
For sufficiently small $\varepsilon>0$, depending only on $a$,
the event $\{\mathsf I_\lambda\le\varepsilon\}$ therefore implies
$\sup_{-1\le u\le0}|\mathsf M_u^\lambda|\ge d_a/2$ and the quadratic variation 
$[\mathsf M^\lambda]_0\le2\varepsilon$.
For a continuous local martingale $\mathsf M$ started from zero
at time $-1$, the exponential martingale inequality reads
\[
\Prob\left(
\sup_{-1\le u\le0}\left|\mathsf M_u\right|\ge A,\,
[\mathsf M]_0\le B
\right)\le2\ee^{-A^2/(2B)},\qquad A,B>0.
\]
It follows that
\begin{equation}\label{eq:phase-variance-small-ball}
\sup_{\substack{x\ge1,\,\theta\in\R\\\left|\lambda\right|\le1}}
\Prob\left(\mathsf I_\lambda\le\varepsilon\right)
\le2\ee^{-d_a^2/\left(16\varepsilon\right)}.
\end{equation}
Consequently $\mathsf I_\lambda>0$ almost surely and, for every
$r>0$, $\E[\mathsf I_\lambda^{-r}]$ is bounded uniformly in these
parameters. The same argument works on any fixed interval
$|\lambda|\le R$, with $2\mathsf I_\lambda$ replaced by
$(1+R)\mathsf I_\lambda$ in the preceding deterministic bound.

\textbf{Step 2. Parameter derivatives and inverse moment bounds.} We use the standard smooth dependence of SDE solutions on their
initial values; see \cite[Theorem~1.4.1]{KunitaFlows1986}.
For each fixed $x$, apply this result to the augmented equation
with an additional coordinate $\Lambda_u$ satisfying
$\dd\Lambda_u=0$, $\Lambda_{-1}=\lambda$.
On any bounded interval of initial values of $\Lambda$, replace
$\Lambda$ in the drift by a smooth function of compact support
equal to it on a neighbourhood of that interval. The resulting
coefficients have bounded spatial derivatives of every order.
Since $\Lambda$ is constant, this replacement does not change the
solutions under consideration. The solution therefore has a version
smooth in $(\theta,\lambda)$, with derivatives continuous jointly
with $u$. Localizing by the suprema of these derivatives on compact
sets of initial values permits differentiation under the stochastic
integrals, giving the equations below.

Let $\mathsf J_u^\lambda=\partial_\theta\mathsf\alpha_u^\lambda$
be the derivative with respect to the initial phase. It solves
\[
\begin{aligned}
\dd\mathsf J_u^\lambda
={}&\left[-a\cos\left(\mathsf\alpha_u^\lambda\right)
-\lambda\sin\left(\mathsf\alpha_u^\lambda\right)\right]
\mathsf J_u^\lambda\,\dd u
-\sin\left(\mathsf\alpha_u^\lambda\right)
\mathsf J_u^\lambda\,\dd\mathsf W_{1,u}
+\cos\left(\mathsf\alpha_u^\lambda\right)
\mathsf J_u^\lambda\,\dd\mathsf W_{2,u},
\qquad \mathsf J_{-1}^\lambda=1.
\end{aligned}
\]
This scalar linear equation gives a strictly positive solution, with
\[
\begin{aligned}
\log\mathsf J_u^\lambda
={}&\int_{-1}^u\left[
-a\cos\left(\mathsf\alpha_v^\lambda\right)
-\lambda\sin\left(\mathsf\alpha_v^\lambda\right)-\frac12
\right]\dd v
-\int_{-1}^u\sin\left(\mathsf\alpha_v^\lambda\right)
\dd\mathsf W_{1,v}
+\int_{-1}^u\cos\left(\mathsf\alpha_v^\lambda\right)
\dd\mathsf W_{2,v}.
\end{aligned}
\]
The drift is bounded in absolute value by $a+3/2$, or by
$a+R+1/2$ on $|\lambda|\le R$, and the martingale in this
expression has quadratic variation $u+1$.
The exponential martingale inequality therefore implies that
\[
\mathsf J_*^\lambda\defeq
\sup_{-1\le u\le0}
\max\left(\mathsf J_u^\lambda,
\left(\mathsf J_u^\lambda\right)^{-1}\right)
\]
has moments of every positive order, uniformly in $x,\theta,\lambda$.

For the parameter derivatives, write
\[
\begin{gathered}
b_u^\lambda\left(y\right)\defeq
\frac{ax}{2}\ee^{au/2}-a\sin y+\lambda\left(\cos y-1\right),
\qquad
c_1\left(y\right)\defeq\cos y-1,\quad
c_2\left(y\right)\defeq\sin y,
\\
\mathsf L_u^\lambda\defeq\partial_\lambda\mathsf\alpha_u^\lambda,
\qquad
\mathsf F_\lambda\defeq\mathsf\alpha_0^\lambda,
\qquad
\mathsf L_\lambda\defeq\mathsf L_0^\lambda
=\partial_\lambda\mathsf F_\lambda.
\end{gathered}
\]
Thus $\mathsf F_\lambda$ is the terminal phase and
$\mathsf L_\lambda$ its parameter derivative. Primes on
$b_u^\lambda,c_1,c_2$ denote derivatives in their spatial argument.
The process $\mathsf L^\lambda$ satisfies
\[
\begin{aligned}
\dd\mathsf L_u^\lambda
={}&\left[
\left(b_u^\lambda\right)'\left(\mathsf\alpha_u^\lambda\right)
\mathsf L_u^\lambda+c_1\left(\mathsf\alpha_u^\lambda\right)
\right]\dd u
+\sum_{j=1}^2c_j'\left(\mathsf\alpha_u^\lambda\right)
\mathsf L_u^\lambda\,\dd\mathsf W_{j,u},
\qquad \mathsf L_{-1}^\lambda=0.
\end{aligned}
\]
Variation of constants, using the same homogeneous equation as for
$\mathsf J^\lambda$, gives
\[
\mathsf L_\lambda
=-\mathsf J_0^\lambda\int_{-1}^0
\frac{1-\cos\left(\mathsf\alpha_v^\lambda\right)}
{\mathsf J_v^\lambda}\,\dd v,
\qquad
\left|\mathsf L_\lambda\right|
\ge\frac{\mathsf I_\lambda}{\left(\mathsf J_*^\lambda\right)^2}.
\]
Since $\mathsf I_\lambda>0$ almost surely and $\mathsf J^\lambda>0$,
we have $\mathsf L_\lambda<0$ almost surely. H\"older's inequality
and the preceding moment bounds imply
\[
\sup_{\substack{x\ge1,\,\theta\in\R\\\left|\lambda\right|\le1}}
\E\left[\left|\mathsf L_\lambda\right|^{-r}\right]<\infty,
\qquad r>0.
\]

We record the positive-order derivative bounds needed below.
All positive-order spatial derivatives of $b_u^\lambda$, and all
derivatives of $c_1,c_2$, are bounded independently of $x,\theta,u$
and $|\lambda|\le1$. The forcing in the equation for
$\mathsf L^\lambda$ is also bounded. The Burkholder--Davis--Gundy
and H\"older inequalities give, for every $r\ge2$,
\[
\E\left[\sup_{-1\le v\le u}
\left|\mathsf L_v^\lambda\right|^r\right]
\le C_{a,r}
+C_{a,r}\int_{-1}^u
\E\left[\sup_{-1\le v\le t}
\left|\mathsf L_v^\lambda\right|^r\right]\dd t.
\]
Gronwall's inequality gives a uniform bound. For
$\mathsf P_u^\lambda=\partial_\lambda^2\mathsf\alpha_u^\lambda$,
the differentiated equation is
\[
\begin{aligned}
\dd\mathsf P_u^\lambda
={}&\left[
\left(b_u^\lambda\right)'\left(\mathsf\alpha_u^\lambda\right)
\mathsf P_u^\lambda
+\left(b_u^\lambda\right)''\left(\mathsf\alpha_u^\lambda\right)
\left(\mathsf L_u^\lambda\right)^2
+2c_1'\left(\mathsf\alpha_u^\lambda\right)\mathsf L_u^\lambda
\right]\dd u
\\
&+\sum_{j=1}^2\left[
c_j'\left(\mathsf\alpha_u^\lambda\right)\mathsf P_u^\lambda
+c_j''\left(\mathsf\alpha_u^\lambda\right)
\left(\mathsf L_u^\lambda\right)^2
\right]\dd\mathsf W_{j,u},
\qquad \mathsf P_{-1}^\lambda=0.
\end{aligned}
\]
Its additional terms have uniformly bounded $r$th moments, using
the first-derivative estimate at order $2r$. At order $k$, the
highest derivative again appears linearly with the same bounded
coefficients. The remaining terms are finite sums of products of
bounded coefficient derivatives and parameter derivatives of orders
less than $k$. Induction, using H\"older's inequality at the required
higher moment orders, followed by Burkholder--Davis--Gundy and
Gronwall, gives
\[
\sup_{\substack{x\ge1,\,\theta\in\R\\\left|\lambda\right|\le1}}
\E\left[
\sup_{-1\le u\le0}
\left|\partial_\lambda^k\mathsf\alpha_u^\lambda\right|^r
\right]<\infty,
\qquad k\ge1,\quad r\ge2.
\]
2These estimates also
hold on every bounded $\lambda$-interval. The fundamental theorem
of calculus and the estimate at the next derivative order show that
the derivatives are continuous in $\lambda$ in the process norm
$(\E[\sup_{-1\le u\le0}|\mathsf X_u|^r])^{1/r}$; the estimate one
order further justifies differentiation in this norm.
In particular, terminal values have the corresponding properties in
$\mathrm{L}^r(\Prob)$, whose norm is
$\|\mathsf X\|_r=(\E[|\mathsf X|^r])^{1/r}$.
The bounds concern positive-order derivatives only: the term
$ax\ee^{au/2}/2$ disappears from the differentiated equations, and
no moment bound on $\mathsf F_\lambda$ uniform in $x,\theta$ is used.

The reciprocal rule gives, for example,
\[
\partial_\lambda\left(\mathsf L_\lambda^{-1}\right)
=-\mathsf L_\lambda^{-2}\partial_\lambda\mathsf L_\lambda.
\]
At any order, the derivative is a finite sum of products of negative
powers of $\mathsf L_\lambda$ and positive-order derivatives of
$\mathsf L_\lambda$. H\"older's inequality bounds all their moments
uniformly. These rules also hold in $\mathrm{L}^r(\Prob)$: first differentiate
$(\mathsf L_\lambda-\delta)^{-1}$, $\delta>0$, and then let
$\delta\downarrow0$. Since $\mathsf L_\lambda<0$, the reciprocal
factors are bounded by powers of $|\mathsf L_\lambda|^{-1}$.
The differences from their limits are bounded by $\delta$ times
products of such powers and parameter derivatives, so convergence
of each derivative holds in $\mathrm{L}^r$, uniformly on bounded
$\lambda$-intervals. Thus
\[
\sup_{\substack{x\ge1,\,\theta\in\R\\\left|\lambda\right|\le1}}
\E\left[
\left|\partial_\lambda^k\left(\mathsf L_\lambda^{-1}\right)\right|^r
\right]<\infty,
\qquad k\ge0,\quad r\ge2,
\]
and these derivatives are continuous in $\lambda$ in every such norm.

\textbf{Step 3. Cameron–Martin translation.} We now use Gaussian integration by parts along a deterministic
translation of the first Brownian path. Put
\[
h\left(u\right)\defeq u+1,\qquad
\tau_\eta\left(w_1,w_2\right)\defeq
\left(w_1+\eta h,w_2\right),\qquad
\mathsf Z\defeq\mathsf W_{1,0}.
\]
The random variable $\mathsf Z$ is standard normal. For a bounded
measurable random variable $\mathsf G:\Omega\to\R$, the
Cameron--Martin formula \cite{CameronMartin1944} gives
\begin{equation}\label{eq:cameron-martin-shift}
\E\left[\mathsf G\circ\tau_\eta\right]
=
\E\left[\mathsf G\ee^{\eta\mathsf Z-\eta^2/2}\right].
\end{equation}
By truncation, the identity also holds whenever its right-hand side
is absolutely integrable; in particular, it holds for
$\mathsf G\in \mathrm{L}^r(\Prob)$ with $r>1$.

The translated Wiener measure is equivalent to the original one.
Approximating It\^o integrals by adapted step sums therefore permits
translation of \eqref{eq:shifted-phase-sde}: replacing
$\mathsf W_1$ by $\mathsf W_1+\eta h$ adds
$\eta$ times the first diffusion coefficient to the drift.
Pathwise uniqueness gives, for each fixed $\lambda,\eta$,
\[
\mathsf\alpha^\lambda\circ\tau_\eta
=\mathsf\alpha^{\lambda+\eta},\qquad
\mathsf F_\lambda\circ\tau_\eta=\mathsf F_{\lambda+\eta},
\quad\text{almost surely}.
\]
The Cameron--Martin formula and H\"older's inequality also imply
\[
\left\|\mathsf X\circ\tau_\eta\right\|_r
\le\ee^{\eta^2/\left(2r\right)}
\left\|\mathsf X\right\|_{2r},\qquad r\ge1.
\]
Thus we may pass to limits of parameter difference quotients under
translation. For every integer $k\ge0$,
\[
\left(\partial_\lambda^k\mathsf F_\lambda\right)\circ\tau_\eta
=\left.\partial_\mu^k\mathsf F_\mu\right|_{\mu=\lambda+\eta}.
\]
The same identity holds for derivatives of $\mathsf L_\lambda^{-1}$.
Moreover, $\mathsf Z\circ\tau_\eta=\mathsf Z+\eta$.

\textbf{Step 4. Integration by parts and Fourier decay.} Define the random weights
\[
\mathsf H_0^\lambda\defeq1,
\qquad
\mathsf H_{j+1}^\lambda\defeq
\left(\mathsf Z+\lambda\right)
\frac{\mathsf H_j^\lambda}{\mathsf L_\lambda}
-\partial_\lambda\left(
\frac{\mathsf H_j^\lambda}{\mathsf L_\lambda}
\right),\qquad j\ge0.
\]
For example,
\[
\mathsf H_1^\lambda
=\frac{\mathsf Z+\lambda}{\mathsf L_\lambda}
+\frac{\partial_\lambda\mathsf L_\lambda}{\mathsf L_\lambda^2}.
\]
Induction using the translation identities gives
$\mathsf H_j^\lambda=\mathsf H_j^0\circ\tau_\lambda$ almost surely
for each fixed $\lambda$. Every parameter derivative of a weight
is a finite sum of products of $\mathsf Z+\lambda$, positive-order
derivatives of $\mathsf F_\lambda$, and negative powers of
$\mathsf L_\lambda$. The preceding bounds and H\"older's inequality
therefore give
\[
\sup_{\substack{x\ge1,\,\theta\in\R\\\left|\lambda\right|\le1}}
\E\left[
\left|\partial_\lambda^k\mathsf H_j^\lambda\right|^r
\right]<\infty,
\qquad j,k\ge0,\quad r\ge2.
\]
These derivatives are also continuous in $\mathrm{L}^r$.

Let $f:\R\to\R$ be smooth and bounded, with bounded derivatives.
For fixed $j$, apply \eqref{eq:cameron-martin-shift} with
$\eta=\lambda$ and
\[
\mathsf G\defeq
f\left(\mathsf F_0\right)\frac{\mathsf H_j^0}{\mathsf L_0}.
\]
The shift identities established above give
\[
\mathsf G\circ\tau_\lambda
=
f\left(\mathsf F_\lambda\right)
\frac{\mathsf H_j^\lambda}{\mathsf L_\lambda}.
\]
Consequently,
\begin{equation}\label{eq:cameron-martin-weighted}
\E\left[
f\left(\mathsf F_\lambda\right)
\frac{\mathsf H_j^\lambda}{\mathsf L_\lambda}
\right]
=
\E\left[
f\left(\mathsf F_0\right)
\frac{\mathsf H_j^0}{\mathsf L_0}
\ee^{\lambda\mathsf Z-\lambda^2/2}
\right].
\end{equation}
The integrand on the left is differentiable in
$\mathrm L^1(\Prob)$ by the preceding estimates.
On the right, $\mathsf G$ has moments of every order, and
\[
\left|
\partial_\lambda
\ee^{\lambda\mathsf Z-\lambda^2/2}
\right|
=
\left|\mathsf Z-\lambda\right|
\ee^{\lambda\mathsf Z-\lambda^2/2}
\le
\left(\left|\mathsf Z\right|+1\right)\ee^{\left|\mathsf Z\right|},
\qquad |\lambda|\le1.
\]
H\"older's inequality and Gaussian exponential moments show that
the last expression multiplied by $|\mathsf G|$ is integrable.
We may therefore differentiate
\eqref{eq:cameron-martin-weighted} at $\lambda=0$.
Since $\partial_\lambda\mathsf F_\lambda=\mathsf L_\lambda$,
the product rule gives
\[
\begin{aligned}
\E\left[f'\left(\mathsf F_0\right)\mathsf H_j^0\right]
&=\E\left[
f\left(\mathsf F_0\right)
\left\{
\mathsf Z\frac{\mathsf H_j^0}{\mathsf L_0}
-\left.\partial_\lambda\left(
\frac{\mathsf H_j^\lambda}{\mathsf L_\lambda}
\right)\right|_{\lambda=0}
\right\}\right]
=\E\left[f\left(\mathsf F_0\right)\mathsf H_{j+1}^0\right].
\end{aligned}
\]
Starting from $\mathsf H_0^0=1$ and iterating yields
\[
\E\left[f^{(m)}\left(\mathsf F_0\right)\right]
=\E\left[f\left(\mathsf F_0\right)\mathsf H_m^0\right].
\]
Applying this to the real and imaginary parts of $f(y)=\ee^{\ii ny}$,
we obtain
\[
\mathfrak K_{x,n}\left(\theta;a\right)
=\left(\ii n\right)^{-m}
\E\left[\ee^{\ii n\mathsf F_0}\mathsf H_m^0\right],
\qquad n\ne0.
\]
The uniform first moment of $\mathsf H_m^0$ gives
$|\mathfrak K_{x,n}(\theta;a)|\le C_{a,m}|n|^{-m}$.
Conditioning as at the beginning proves the same bound for
$\mathfrak q_n(x;a)$. Since $m\ge s$ and $\mathfrak q_0(x;a)=1$,
the lemma follows.
\end{proof}

\begin{rmk}
Lemma~\ref{lem:uniform-modes} implies that, for fixed $a>0$
and every $x\ge1$, the law of $\mathsf\alpha_0(x;a)$ modulo
$2\pi$ has a smooth density $\varrho(x,\phi;a)$ relative to
$\dd\phi/(2\pi)$ on $\mathbb T=\R/(2\pi\Z)$ given by,
\[
\varrho\left(x,\phi;a\right)
=\sum_{n\in\Z}\mathfrak q_n\left(x;a\right)\ee^{-\ii n\phi}.
\]
The conclusion of the lemma is, by standard arguments,  equivalent to
\[
\sup_{x\ge1}
\left\|\varrho\left(x,\cdot;a\right)\right\|_{\mathrm C^k(\mathbb T)}
<\infty,
\qquad k=0,1,\ldots.
\]
We also observe that the moment equations~\eqref{eq:q-hierarchy} take the form
\[
\partial_x\varrho
=-\partial_\phi\varrho
+\frac1x\left[
2\partial_\phi\left(\sin\phi\,\varrho\right)
+\frac2a\partial_\phi^2\left((1-\cos\phi)\varrho\right)
\right],
\qquad x>1.
\]
 One could alternatively try to prove
Lemma~\ref{lem:uniform-modes} using a quantitative version of
H\"ormander's theorem with estimates uniform in $x\ge1$.
This is natural, since the argument above is a Malliavin
calculus argument in disguise. The correlation function $\mathcal{R}_a(x)$ also also has a phase-space representation in terms of $\varrho (x,\phi;a)$ but will not make use of it.
\end{rmk}

\subsection{Corrected Fourier modes}

Throughout this subsection, $a>0$ is real and $p=2/a$.
Write $\boldsymbol{\mathfrak q}(x;a)=(\mathfrak q_n(x;a))_{n\in\Z}$,
and let $\mathcal N$ denote multiplication by the index:
$(\mathcal N\mathbf u)_n=nu_n$.
Define the matrix $\mathbf B(a)$ by
\[
\begin{gathered}
\mathbf B_{n,n}\left(a\right)\defeq-pn^2,
\qquad
\mathbf B_{n,n-1}\left(a\right)\defeq\frac{n\left(n+a\right)}a,
\qquad
\mathbf B_{n,n+1}\left(a\right)\defeq\frac{n\left(n-a\right)}a,
\qquad n\in\Z,
\end{gathered}
\]
with all other entries zero. Thus \eqref{eq:q-hierarchy} reads
\[
\partial_x\boldsymbol{\mathfrak q}
=\left(\ii\mathcal N+\frac{\mathbf B\left(a\right)}x\right)
\boldsymbol{\mathfrak q}.
\]
To remove the interaction of order $x^{-1}$ between neighbouring
modes, define
\begin{equation}\label{eq:normalform}
\mathbf T_{n,n-1}\left(a\right)\defeq
-\ii\frac{n\left(n+a\right)}a,
\qquad
\mathbf T_{n,n+1}\left(a\right)\defeq
\ii\frac{n\left(n-a\right)}a,
\qquad n\in\Z,
\end{equation}
with all other entries zero. These choices give the cancellation
\[
\mathbf B\left(a\right)
+\ii\left(\mathbf T\left(a\right)\mathcal N
-\mathcal N\mathbf T\left(a\right)\right)
=-p\mathcal N^2.
\]
We also put
\begin{equation}\label{eq:finite-band}
\mathbf F\left(a\right)\defeq
\mathbf T\left(a\right)\mathbf B\left(a\right)
-\mathbf T\left(a\right)+p\mathcal N^2\mathbf T\left(a\right).
\end{equation}
The change of variables using $\mathbf T(a)$ in the proof below
is of Harris--Lutz type \cite{HarrisLutz} and appears to be rather standard in the literature on asymptotic analysis of ordinary differential equations.
The rows of $\mathbf T(a)$ and $\mathbf F(a)$ involve, respectively,
only the indices $n-1,n+1$ and $n-2,\ldots,n+2$, with coefficients
growing at most quadratically and quartically in $n$.
Their zeroth rows vanish.

The following proposition gives the correlation asymptotics, leaving
only the oscillatory coefficient to be identified.
\begin{prop}\label{prop:real-asymptotics}
For real $a>1$, the limit
\begin{equation}\label{eq:amplitude-definition}
\mathfrak U\left(a\right)\defeq
t_1\left(a\right)\lim_{x\to\infty}
\ee^{-\ii x}x^{2/a}
\left(
\mathfrak q_1\left(x;a\right)-\frac{\ii\left(a+1\right)}{ax}
\right)
\end{equation}
exists. For each fixed real $a>0$ with $a\ne1$, as $x\to\infty$,
\begin{equation}\label{eq:real-asymptotics}
\mathcal R_a\left(x\right)=
\begin{cases}
1-\dfrac{2}{ax^2}+o\left(x^{-2}\right),&0<a<1,\\[4pt]
1+2\Re\left(\mathfrak U\left(a\right)\ee^{\ii x}\right)x^{-2/a}
+o\left(x^{-2/a}\right),&a>1.
\end{cases}
\end{equation}
\end{prop}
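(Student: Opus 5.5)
We follow the outline of Section~\ref{sec:strategy}. \textbf{Step 1 (algebraic approximation).} Substitute $\mathbf Q=\mathbf e_0+\mathbf s_1/x+\mathbf s_2/x^2+\mathbf s_3/x^3$ into $\partial_x\boldsymbol{\mathfrak q}=(\ii\mathcal N+\mathbf B/x)\boldsymbol{\mathfrak q}$ and match powers of $x$. This gives $\ii\mathcal N\mathbf s_1=-\mathbf B\mathbf e_0$, and then $\ii\mathcal N\mathbf s_{j+1}=-(j+\mathbf B)\mathbf s_j$ for $j=1,2$. These are solvable off the zeroth coordinate, and we set $(\mathbf s_j)_0=0$. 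The zeroth row of $\mathbf B$ vanishes, so the zeroth component of each right-hand side is zero. Each $\mathbf s_j$ is supported on $|n|\le j$.

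Explicitly, $\mathbf B\mathbf e_0$ has entries $\mathbf B_{1,0}=(1+a)/a$ and $\mathbf B_{-1,0}=(1+a)/a$ (using $(-1)(-1-a)/a$). Hence $(\mathbf s_1)_{\pm1}=\pm\ii(1+a)/a$. This explains the subtraction $\ii(a+1)/(ax)$ in \eqref{eq:amplitude-definition} and shows the $x^{-1}$ term is purely imaginary. The residual $\boldsymbol{\mathfrak r}=\boldsymbol{\mathfrak q}-\mathbf Q$ has $\mathfrak r_0=0$ and satisfies
\[
\boldsymbol{\mathfrak r}'=(\ii\mathcal N+\mathbf B/x)\boldsymbol{\mathfrak r}+\mathbf g/x^4,
\]
with $\mathbf g$ finitely supported. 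By Lemma~\ref{lem:uniform-modes}, $\sup_{x\ge1}\sup_n(1+|n|)^s|\mathfrak r_n|<\infty$ for every $s$.

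\textbf{Step 2 (Harris--Lutz correction).} Set $\boldsymbol{\mathfrak w}=\boldsymbol{\mathfrak r}+\mathbf T\boldsymbol{\mathfrak r}/x$. The cancellation identity for $\mathbf T$ and the definition \eqref{eq:finite-band} of $\mathbf F$ give the corrected equation of Section~\ref{sec:strategy}. Coordinatewise, for $1\le t\le x$,
\[
\mathfrak w_n(x)=\ee^{\ii n(x-t)}(t/x)^{pn^2}\mathfrak w_n(t)+\int_t^x\ee^{\ii n(x-y)}(y/x)^{pn^2}\Big[\frac{(\mathbf F\boldsymbol{\mathfrak r})_n}{y^2}+O_n(y^{-4})\Big]\dd y.
\]
The polynomial growth of $\mathbf F,\mathbf T$ in $n$ is absorbed by the arbitrary polynomial decay of $\mathfrak r_n$. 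Every estimate below is therefore uniform in $n$ after multiplying by any power $(1+|n|)^s$.

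\textbf{Step 3a (case $a>1$, i.e. $p<2$).} Take $t=1$. The integral is bounded by $x^{-pn^2}\int_1^xy^{pn^2-2}\dd y\lesssim x^{-\min(pn^2,1)}$, up to logarithms. This yields $\mathfrak r_n=O(x^{-p/2})$ by iterating once more and using $|n|\ge1$, with weights.

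Now multiply the first-mode equation by $\ee^{-\ii x}x^p$. The derivative of $\ee^{-\ii x}x^p\mathfrak w_1$ equals $\ee^{-\ii x}x^{p-2}(\mathbf F\boldsymbol{\mathfrak r})_1+O(x^{p-4})$. This is $O(x^{p/2-2})$, which is integrable since $p<2$. Hence $\ee^{-\ii x}x^p\mathfrak w_1$ converges to a limit $L$.

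Next, $\mathfrak q_1=(\mathbf s_1)_1/x+O(x^{-2})+\mathfrak w_1-(\mathbf T\boldsymbol{\mathfrak r})_1/x$. The last term is $O(x^{-1-p/2})=o(x^{-p})$ because $p<2$. Any $O(x^{-2})$ term is likewise $o(x^{-p})$. This gives the existence of $\mathfrak U(a)=t_1L$. A wrinkle: for $1<a$ close to 1, the $x^{-2}$ part of $\mathbf Q$ and the $(\mathbf s_1)$ real part must be checked to be $o(x^{-p})$. They are, since $p<2$ and $(\mathbf s_1)_1$ is imaginary.

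For $|n|\ge2$, the same bound gives $\mathfrak w_n=O(x^{-4p})+O(x^{-1}\cdot x^{-p/2})=o(x^{-p})$ uniformly with weights. Summing \eqref{eq:correlation-mode-sum} against $t_n$ then yields the $a>1$ case.

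\textbf{Step 3b (case $a<1$, i.e. $p>2$).} Every nonzero mode has $pn^2>2$. The variation of constants bound gives $\mathfrak w_n=O(x^{-1})$ from the a priori bound, then $\mathfrak r_n=O(x^{-1})$. Feeding this back into the forcing $(\mathbf F\boldsymbol{\mathfrak r})/y^2=O(y^{-3})$ gives $\mathfrak w_n=O(x^{-2})$. One more round gives $O(x^{-\min(p,3)})=o(x^{-2})$, all uniform in $n$ with weights. Thus $\mathfrak q_n=(\mathbf s_1)_n/x+(\mathbf s_2)_n/x^2+o(x^{-2})$ uniformly.

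Then
\[
\mathcal R_a=1+2x^{-2}\sum_{n=1,2}t_n\Re(\mathbf s_2)_n+o(x^{-2}),
\]
since $\mathbf s_1$ is imaginary. A direct computation of $(\mathbf s_2)_{1},(\mathbf s_2)_2$ should give the coefficient $-p=-2/a$.

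\textbf{Main obstacle.} The delicate part is the bookkeeping of the bootstrap in Step 3b, and the borderline exponents in Step 3a. The uniform-in-$n$ control through Lemma~\ref{lem:uniform-modes} is what makes the infinite-dimensional variation of constants and the summation legitimate. The final explicit evaluation of the $\mathbf s_2$ contribution to $-2/a$ is routine but must be checked.
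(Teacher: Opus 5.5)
Your proposal is correct and follows essentially the same route as the paper: the algebraic subtraction $\mathbf Q$, the Harris--Lutz correction $\boldsymbol{\mathfrak w}=\boldsymbol{\mathfrak r}+\mathbf T\boldsymbol{\mathfrak r}/x$, weighted variation of constants seeded by Lemma~\ref{lem:uniform-modes}, integrability of $(\ee^{-\ii x}x^p\mathfrak w_1)'$ for $p<2$, and the $O(x^{-1})\to O(x^{-2})\to o(x^{-2})$ bootstrap for $p>2$; the paper only avoids your logarithms by using the damping exponent $\mu=\min(1,p/2)<p$. The computation you left unchecked does work out: $(\mathbf s_2)_1=(p-1)(a+1)/a$ and $(\mathbf s_2)_2=-(a+1)(a+2)/a^2$ are real, and with $t_1=-a/(a+1)$ and $t_2=a(a-1)/((a+1)(a+2))$ the coefficient is $2[-(p-1)-(a-1)/a]=-p$.
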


\begin{proof}
The first step is to separate the algebraic terms forced by the
constant mode $\mathfrak q_0=1$ from the remaining behaviour of the
nonzero modes. We construct a finite expansion in inverse powers
of $x$ and subtract it from the moments, choosing its coefficients
so that the residual equation has a forcing term of higher order.
This removes the explicit algebraic background before estimating
the residual. We then correct the residual to cancel the leading
interaction between neighbouring modes. The resulting equation
has a diagonal part that displays both the oscillation and the
damping of each mode, while the remaining interactions carry an
additional power of $x^{-1}$. These equations allow us to isolate
the first oscillatory mode when $p<2$, and to show that the residual
is negligible at order $x^{-2}$ when $p>2$.

\textbf{Step 1. Common  analysis of both regimes.}
We first construct a finite algebraic approximation to the moments.
Its coefficients are defined directly from the hierarchy, without
assuming that an asymptotic expansion exists.
Let $\mathbf e_j$, $j\in\Z$, be the unit vector at index $j$, and define
$\mathcal N^{-1}\mathbf e_0=0$ and
$\mathcal N^{-1}\mathbf e_n=n^{-1}\mathbf e_n$ for $n\ne0$.
With $\mathbf I$ denoting the identity, put
\begin{equation}\label{eq:formal-mode-coefficients}
\begin{gathered}
\mathbf s_0\left(a\right)\defeq\mathbf e_0,
\qquad
\mathbf s_{j+1}\left(a\right)\defeq
\ii\mathcal N^{-1}\left(\mathbf B\left(a\right)+j\mathbf I\right)
\mathbf s_j\left(a\right),\qquad j=0,1,2,
\\
\mathbf Q\left(x;a\right)\defeq
\mathbf e_0+\frac{\mathbf s_1\left(a\right)}x
+\frac{\mathbf s_2\left(a\right)}{x^2}
+\frac{\mathbf s_3\left(a\right)}{x^3}.
\end{gathered}
\end{equation}
Each $\mathbf s_j$ is supported on $\{-j,\ldots,j\}$, and
its zeroth coordinate vanishes for $j\ge1$ by the definition of
$\mathcal N^{-1}$. We shall compute only
$\mathbf s_1$ and $\mathbf s_2$. The auxiliary term
$\mathbf s_3/x^3$ makes the remaining explicit forcing of
order $x^{-4}$.
The first coefficient is
\begin{equation}\label{eq:first-formal-mode}
\mathbf s_1\left(a\right)
=-\mathbf T\left(a\right)\mathbf e_0
=\ii\frac{a+1}{a}\left(\mathbf e_1-\mathbf e_{-1}\right).
\end{equation}

Set
\[
\boldsymbol{\mathfrak r}\left(x;a\right)\defeq
\boldsymbol{\mathfrak q}\left(x;a\right)-\mathbf Q\left(x;a\right),
\qquad
\mathbf g\left(a\right)\defeq
\left(\mathbf B\left(a\right)+3\mathbf I\right)\mathbf s_3\left(a\right).
\]
Since the zeroth row of $\mathbf B(a)$ vanishes, the recurrence
in \eqref{eq:formal-mode-coefficients} gives
$\ii\mathcal N\mathbf s_{j+1}=-(\mathbf B+j\mathbf I)\mathbf s_j$.
Substitution into \eqref{eq:q-hierarchy} therefore yields the
exact equation
\[
\partial_x\boldsymbol{\mathfrak r}
=\left(\ii\mathcal N+\frac{\mathbf B\left(a\right)}x\right)
\boldsymbol{\mathfrak r}+\frac{\mathbf g\left(a\right)}{x^4}.
\]
To move the remaining neighbouring interactions to order $x^{-2}$,
define the corrected residual using \eqref{eq:normalform}.
The cancellation above and direct differentiation give
\begin{equation}\label{eq:corrected-residual}
\begin{gathered}
\boldsymbol{\mathfrak w}\left(x;a\right)\defeq
\boldsymbol{\mathfrak r}\left(x;a\right)
+\frac{\mathbf T\left(a\right)\boldsymbol{\mathfrak r}\left(x;a\right)}x,
\\
\partial_x\boldsymbol{\mathfrak w}
=\left(\ii\mathcal N-\frac{p\mathcal N^2}x\right)
\boldsymbol{\mathfrak w}
+\frac{\mathbf F\left(a\right)\boldsymbol{\mathfrak r}}{x^2}
+\frac{\mathbf g\left(a\right)}{x^4}
+\frac{\mathbf T\left(a\right)\mathbf g\left(a\right)}{x^5}.
\end{gathered}
\end{equation}
Here $\mathfrak r_0=\mathfrak w_0=0$, and the two explicit
source vectors have finite support. All these identities are
coordinatewise finite sums. To recover $\boldsymbol{\mathfrak r}$
we shall use the identity
$\boldsymbol{\mathfrak r}
=\boldsymbol{\mathfrak w}-x^{-1}\mathbf T(a)\boldsymbol{\mathfrak r}$
together with bounds already obtained for
$\boldsymbol{\mathfrak r}$.

By Lemma~\ref{lem:uniform-modes} and the finite support of
the coefficients of $\mathbf Q$, for every $s\ge0$,
\[
\sup_{x\ge1}\sup_{n\ne0}
\left(1+\left|n\right|\right)^s
\left|\mathfrak r_n\left(x;a\right)\right|<\infty.
\]
The same bound holds for $\mathbf T(a)\boldsymbol{\mathfrak r}$
and $\mathbf F(a)\boldsymbol{\mathfrak r}$, and for the initial
values $\mathfrak w_n(1;a)$. Indeed, each row of $\mathbf T(a)$
and $\mathbf F(a)$ contains finitely many neighbouring
coordinates, with coefficients of degree at most two and four
in $n$, respectively. A bound with weight $s+2$ or $s+4$ for
$\boldsymbol{\mathfrak r}$ therefore gives a bound with weight
$s$ after applying these matrices. The same observation applies
to every weighted decay estimate below.

The scalar solution factor for coordinate $n\ne0$ in
\eqref{eq:corrected-residual} is
$\ee^{\ii n(x-t)}(t/x)^{pn^2}$. We shall use
\begin{equation}\label{eq:scalar-damping}
\int_1^x\left(\frac tx\right)^{pn^2}t^{-\rho-1}\,\dd t
=\frac{x^{-\rho}-x^{-pn^2}}{pn^2-\rho}
\le\frac{x^{-\rho}}{pn^2-\rho},
\qquad 0<\rho<pn^2.
\end{equation}
Put $\mu=\min(1,p/2)$, so that $\mu\le1$ and $\mu<p$.
The preceding uniform bounds and variation of constants give
\[
\begin{aligned}
\left(1+\left|n\right|\right)^s
\left|\mathfrak w_n\left(x;a\right)\right|
&\le C_{a,s}\left[
x^{-pn^2}+\int_1^x\left(\frac tx\right)^{pn^2}t^{-2}\,\dd t
\right]
\le C_{a,s}\left[
x^{-p}+\frac{x^{-\mu}}{pn^2-\mu}
\right]
\le C_{a,s}x^{-\mu}.
\end{aligned}
\]
We used $t^{-2}\le t^{-\mu-1}$ and
$pn^2-\mu\ge p-\mu>0$. The uniform weighted bound on
$\mathbf T(a)\boldsymbol{\mathfrak r}$ shows that replacing
$\mathfrak w_n$ by $\mathfrak r_n$ adds at most
$C_{a,s}x^{-1}$. Consequently,
\begin{equation}\label{eq:preliminary-residual-bound}
\sup_{n\ne0}\left(1+\left|n\right|\right)^s
\left|\mathfrak r_n\left(x;a\right)\right|
\le C_{a,s}x^{-\mu},
\qquad s\ge0,\quad x\ge1.
\end{equation}

\textbf{Step 2. The $a>1$ regime.} Suppose first that $a>1$, so $0<p<2$ and $\mu=p/2$.
The first row of \eqref{eq:corrected-residual} gives
\[
\begin{aligned}
\frac{\dd}{\dd x}
\left(\ee^{-\ii x}x^p\mathfrak w_1\left(x;a\right)\right)
={}&\ee^{-\ii x}x^{p-2}
\left[\mathbf F\left(a\right)
\boldsymbol{\mathfrak r}\left(x;a\right)\right]_1
+\ee^{-\ii x}x^{p-4}\mathbf g_1\left(a\right)
+\ee^{-\ii x}x^{p-5}
\left[\mathbf T\left(a\right)\mathbf g\left(a\right)\right]_1.
\end{aligned}
\]
By \eqref{eq:preliminary-residual-bound}, the first term is
$O(x^{p-2-\mu})$. It is absolutely integrable because
$p-2-\mu=p/2-2<-1$. The other two terms are also absolutely
integrable, since $p<2$. Thus
$\ee^{-\ii x}x^p\mathfrak w_1(x;a)$ has a finite limit.

To recover the first moment, use
\[
\boldsymbol{\mathfrak q}
=\mathbf Q+\boldsymbol{\mathfrak w}
-\frac{\mathbf T\left(a\right)\boldsymbol{\mathfrak r}}x.
\]
By \eqref{eq:first-formal-mode},
$(\mathbf Q(x;a))_1=\ii(a+1)/(ax)+O(x^{-2})$.
Also, since $\mathfrak r_0=0$, the finite bandwidth and
quadratic coefficient bounds for $\mathbf T(a)$ give, for
every $s\ge0$,
\[
\begin{aligned}
\sup_{n\ne0}(1+|n|)^s
\left|
\frac{[\mathbf T(a)\boldsymbol{\mathfrak r}(x;a)]_n}{x}
\right|
&\le C_{a,s}x^{-1}
\sup_{m\ne0}(1+|m|)^{s+2}|\mathfrak r_m(x;a)|\le C_{a,s}x^{-1-\mu},
\ \ x\ge1,
\end{aligned}
\]
where the second inequality follows from
\eqref{eq:preliminary-residual-bound} with weight $s+2$. Hence
\[
\mathfrak q_1\left(x;a\right)-\frac{\ii\left(a+1\right)}{ax}
=\mathfrak w_1\left(x;a\right)
+O\left(x^{-2}\right)+O\left(x^{-1-\mu}\right).
\]
Since $2>p$ and $1+\mu>p$, the two error terms are $o(x^{-p})$.
Thus the limit in \eqref{eq:amplitude-definition} exists and equals
$t_1(a)\lim_{x\to\infty}\ee^{-\ii x}x^p\mathfrak w_1(x;a)$.
Consequently,
\[
\mathfrak q_1\left(x;a\right)
=\frac{\ii\left(a+1\right)}{ax}
+\frac{\mathfrak U\left(a\right)}{t_1\left(a\right)}
\ee^{\ii x}x^{-p}
+o\left(x^{-p}\right).
\]
The explicit term of order $x^{-1}$ is purely imaginary.

For $|n|\ge2$, the forcing in \eqref{eq:corrected-residual}
is uniformly $O(x^{-2-\mu})$. Since $pn^2\ge4p$ and
$0<t/x\le1$, variation of constants gives
\[
\sup_{\left|n\right|\ge2}
\left|\mathfrak w_n\left(x;a\right)\right|
\le C_a\left[
x^{-4p}+x^{-4p}\int_1^x t^{4p-2-\mu}\,\dd t
\right]
=o\left(x^{-p}\right).
\]
Indeed, the integral contributes $O(x^{-1-\mu})$ if
$4p>1+\mu$, $O(x^{-4p})$ if $4p<1+\mu$, and
$x^{-4p}\log x$ when $4p=1+\mu$. Each is $o(x^{-p})$
because $4p>p$ and $1+\mu>p$.
On these coordinates, $\mathbf Q=O(x^{-2})$, since
$\mathbf s_1$ is supported on $\{-1,1\}$.
The correction $x^{-1}\mathbf T(a)\boldsymbol{\mathfrak r}$
is uniformly $O(x^{-1-\mu})$, so
\[
\sup_{\left|n\right|\ge2}
\left|\mathfrak q_n\left(x;a\right)\right|
=o\left(x^{-p}\right).
\]
The weights $t_n(a)$ are real and absolutely summable. Hence
\[
\left|
\sum_{n=2}^{\infty}t_n\left(a\right)
\Re\mathfrak q_n\left(x;a\right)
\right|
\le
\left(\sum_{n=2}^{\infty}\left|t_n\left(a\right)\right|\right)
\sup_{n\ge2}\left|\mathfrak q_n\left(x;a\right)\right|
=o\left(x^{-p}\right).
\]
Taking real parts of the first-mode expansion and substituting
into \eqref{eq:correlation-mode-sum} proves
\[
\mathcal R_a\left(x\right)
=1+2\Re\left(\mathfrak U\left(a\right)\ee^{\ii x}\right)x^{-p}
+o\left(x^{-p}\right).
\]

\textbf{Step 3. The $a<1$ regime.} Now suppose that $0<a<1$, so $p>2$ and $\mu=1$.
The preliminary bound~\eqref{eq:preliminary-residual-bound},
with $\mu=1$, gives
\[
\sup_{n\ne0}(1+|n|)^s|\mathfrak r_n(x;a)|
\le C_{a,s}x^{-1},
\qquad s\ge0,\quad x\ge1.
\]
Throughout the following estimates, constants may depend on
the fixed parameter $a$ and the indicated weight, but are
independent of $x\ge1$ and the mode index.

Since $\mathfrak r_0=0$, the finite bandwidth and coefficient
bounds for $\mathbf T(a)$ and $\mathbf F(a)$ imply
\[
\begin{aligned}
\sup_{n\ne0}(1+|n|)^s
\left|[\mathbf T(a)\boldsymbol{\mathfrak r}(x;a)]_n\right|
&\le C_{a,s}
\sup_{m\ne0}(1+|m|)^{s+2}|\mathfrak r_m(x;a)|,\\
\sup_{n\ne0}(1+|n|)^s
\left|[\mathbf F(a)\boldsymbol{\mathfrak r}(x;a)]_n\right|
&\le C_{a,s}
\sup_{m\ne0}(1+|m|)^{s+4}|\mathfrak r_m(x;a)|.
\end{aligned}
\]
Indeed, each row involves at most five indices $m$ with
$|m-n|\le2$, for which $1+|n|\le3(1+|m|)$, and the
coefficients grow at most quadratically and quartically,
respectively. Thus the constants in these estimates are
independent of the row index.

The vectors $\mathbf g(a)$ and $\mathbf T(a)\mathbf g(a)$
have finite support. Using the preliminary bound with
weight $s+4$, the forcing in~\eqref{eq:corrected-residual}
therefore satisfies
\[
\begin{aligned}
&\sup_{n\ne0}(1+|n|)^s
\left|
\frac{[\mathbf F(a)\boldsymbol{\mathfrak r}(x;a)]_n}{x^2}
+\frac{\mathbf g_n(a)}{x^4}
+\frac{[\mathbf T(a)\mathbf g(a)]_n}{x^5}
\right|\\
&\qquad\le C_{a,s}\left[
x^{-2}\sup_{m\ne0}(1+|m|)^{s+4}
|\mathfrak r_m(x;a)|+x^{-4}+x^{-5}
\right]
\le C_{a,s}x^{-3}.
\end{aligned}
\]
The preliminary bound and the estimate for $\mathbf T(a)$
also give
\[
\sup_{n\ne0}(1+|n|)^s|\mathfrak w_n(1;a)|
\le C_{a,s}.
\]
Variation of constants and~\eqref{eq:scalar-damping} with
$\rho=2$ consequently yield, for every $n\ne0$,
\[
\begin{aligned}
(1+|n|)^s|\mathfrak w_n(x;a)|
&\le C_{a,s}\left[
x^{-pn^2}
+\int_1^x\left(\frac tx\right)^{pn^2}t^{-3}\,\dd t
\right]\\
&\le C_{a,s}\left[
x^{-p}+\frac{x^{-2}}{pn^2-2}
\right]\le C_{a,s}x^{-2},
\qquad x\ge1.
\end{aligned}
\]
To transfer this estimate to $\boldsymbol{\mathfrak r}$,
use the preliminary bound with weight $s+2$:
\[
\begin{aligned}
\sup_{n\ne0}(1+|n|)^s
\left|
\frac{[\mathbf T(a)\boldsymbol{\mathfrak r}(x;a)]_n}{x}
\right|
&\le C_{a,s}x^{-1}
\sup_{m\ne0}(1+|m|)^{s+2}|\mathfrak r_m(x;a)|\le C_{a,s}x^{-2}.
\end{aligned}
\]
The identity
$\boldsymbol{\mathfrak r}
=\boldsymbol{\mathfrak w}
-x^{-1}\mathbf T(a)\boldsymbol{\mathfrak r}$
therefore gives
\begin{equation}\label{eq:second-order-residual-bound}
\sup_{n\ne0}(1+|n|)^s|\mathfrak r_n(x;a)|
\le C_{a,s}x^{-2},
\qquad s\ge0,\quad x\ge1.
\end{equation}
We now use this improved estimate with weight $4$.
The forcing in~\eqref{eq:corrected-residual} satisfies
\[
\begin{aligned}
&\sup_{n\ne0}
\left|
\frac{[\mathbf F(a)\boldsymbol{\mathfrak r}(x;a)]_n}{x^2}
+\frac{\mathbf g_n(a)}{x^4}
+\frac{[\mathbf T(a)\mathbf g(a)]_n}{x^5}
\right|\\
&\qquad\le C_a\left[
x^{-2}\sup_{m\ne0}(1+|m|)^4|\mathfrak r_m(x;a)|
+x^{-4}+x^{-5}
\right]
\le C_ax^{-4}.
\end{aligned}
\]
Thus variation of constants gives
\[
\begin{aligned}
\sup_{n\ne0}|\mathfrak w_n(x;a)|
\le C_a\left[
x^{-p}+\int_1^x\left(\frac tx\right)^p t^{-4}\,\dd t
\right]\le C_a
\begin{cases}
x^{-p},&2<p<3,\\
x^{-3}(1+\log x),&p=3,\\
x^{-3},&p>3.
\end{cases}
\end{aligned}
\]
Every case is $o(x^{-2})$. Moreover,
\eqref{eq:second-order-residual-bound} with weight $2$
gives
\[
\begin{aligned}
\sup_{n\ne0}
\left|
\frac{[\mathbf T(a)\boldsymbol{\mathfrak r}(x;a)]_n}{x}
\right|
&\le C_ax^{-1}
\sup_{m\ne0}(1+|m|)^2|\mathfrak r_m(x;a)|\le C_ax^{-3}.
\end{aligned}
\]
Consequently,
\[
\sup_{n\ne0}|\mathfrak r_n(x;a)|
\le
\sup_{n\ne0}|\mathfrak w_n(x;a)|+C_ax^{-3}
=o(x^{-2}).
\]
Finally, $\mathbf s_3(a)$ has finite support. Since
$\boldsymbol{\mathfrak q}=\mathbf Q+\boldsymbol{\mathfrak r}$,
we obtain
\[
\begin{aligned}
\sup_{n\ne0}
\left|
\mathfrak q_n(x;a)
-\frac{(\mathbf s_1(a))_n}{x}
-\frac{(\mathbf s_2(a))_n}{x^2}
\right|
&\le
\sup_{n\ne0}|\mathfrak r_n(x;a)|+C_ax^{-3}=o(x^{-2}).
\end{aligned}
\]
This is the required second-order expansion, with a remainder
uniform in the mode index.

The coefficient $\mathbf s_1$ was computed in
\eqref{eq:first-formal-mode} and is purely imaginary.
For $\mathbf s_2=\ii\mathcal N^{-1}(\mathbf B+\mathbf I)\mathbf s_1$,
the positive coordinates are
\[
\left(\mathbf s_2\left(a\right)\right)_n
=
\begin{cases}
\left(p-1\right)\dfrac{a+1}{a},&n=1,\\[4pt]
-\dfrac{\left(a+1\right)\left(a+2\right)}{a^2},&n=2,\\[4pt]
0,&n\ge3.
\end{cases}
\]
Indeed, the first coordinate comes from the diagonal coefficient
$1-p$ acting on $(\mathbf s_1)_1$, while the second comes
from $\mathbf B_{2,1}=2(a+2)/a$ followed by multiplication
by $\ii/2$. No higher positive coordinate occurs.
Since $\sum_{n\ge1}|t_n(a)|<\infty$, the uniform
$o(x^{-2})$ remainder remains $o(x^{-2})$ after summation
in \eqref{eq:correlation-mode-sum}. Using
\[
t_1\left(a\right)=-\frac a{a+1},
\qquad
t_2\left(a\right)=
\frac{a\left(a-1\right)}{\left(a+1\right)\left(a+2\right)},
\]
the coefficient of $x^{-2}$ is
\[
2\left[
t_1\left(a\right)\left(p-1\right)\frac{a+1}{a}
-t_2\left(a\right)\frac{\left(a+1\right)\left(a+2\right)}{a^2}
\right]
=2\left[-\left(p-1\right)-\frac{a-1}{a}\right]
=-p,
\]
where $p=2/a$. Hence
$\mathcal R_a(x)=1-px^{-2}+o(x^{-2})$, proving the first
line of \eqref{eq:real-asymptotics}.
\end{proof}
\section{Identification of the oscillatory coefficient}\label{sec:connection}

We use the Fourier coefficients
\[
 \psi_n\left(a\right)\defeq
 \frac{\left(-1\right)^n\Gamma\left(2a+1\right)}
 {2^{2a}\Gamma\left(a+n+1\right)\Gamma\left(a-n+1\right)},
 \qquad n\ge0.
\]
Reciprocal Gamma factors are understood through their entire extensions.
The next lemma supplies the holomorphic continuation and growth bound
needed to determine the amplitude from its values at integer parameters.
\begin{lem}\label{lem:amplitude-continuation}
The function $\psi_0(a)\mathfrak U(a)$, initially defined for real
$a>1$ by Proposition~\ref{prop:real-asymptotics}, extends holomorphically
to $\Omega=\{a\in\C:\Re a>1\}$. For some constants $C>0$ and
$M\in\N$, its extension satisfies
\begin{equation}\label{eq:amplitude-growth}
 \left|\psi_0\left(a\right)\mathfrak U\left(a\right)\right|
 \le C\left(1+\left|a\right|\right)^M,
 \qquad \Re a>3.
\end{equation}
\end{lem}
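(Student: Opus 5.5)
We prove holomorphy by realising $\psi_0(a)\mathfrak U(a)$ as a locally uniform limit of holomorphic functions built from the hierarchy \eqref{eq:q-hierarchy} with complex parameter, rather than from the SDE. The probabilistic representation is unavailable for complex $a$, so we first need a replacement for Lemma~\ref{lem:uniform-modes}. The idea is to weight the modes by $\psi_n(a)$. The $\psi_n(a)$ are the Fourier coefficients of $|\sin(\theta/2)|^{2a}$, the stationary-type density for the angular part, and they decay like $|n|^{-2a-1}$. For real $a$ we set
\[
\mathfrak v_n(x;a)\defeq\psi_{|n|}(a)\,\mathfrak q_n(x;a).
\]
We then seek a weighted hierarchy for $\mathfrak v$ whose off-diagonal coefficients are symmetric up to the factors $(n\pm a)$. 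Heuristically the generator is self-adjoint in $L^2(|\sin(\theta/2)|^{2a}\dd\theta)$, and conjugating by $\psi$ should produce a system admitting an energy estimate of the form
\[
\partial_x\sum_n|\mathfrak v_n|^2\le0
\]
up to controlled terms.

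\textbf{Step 1: complex-parameter energy bounds.} For real $a$ we first check, by direct computation from \eqref{eq:q-hierarchy} and the ratio $\psi_{n+1}/\psi_n=-(a-n)/(a+n+1)$, that the $\mathfrak v$-system has diagonal part $\ii n-pn^2/x$ and neighbour coefficients of size $O(n^2/(|a|x))$ which are "balanced". We then truncate to $|n|\le N$, imposing $\mathfrak v_n=0$ for $|n|>N$. This finite linear ODE system, with initial data at a fixed point, is entire in $a$ away from $a=0$. A weighted $\ell^2$ estimate, in which the diagonal damping $-\Re(p)n^2/x$ dominates the non-self-adjoint defect, gives bounds on the truncated solutions that are polynomial in $|a|$ and uniform in $N$ and $x\ge1$. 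This is only possible in a region such as $\Re a>1$, with the polynomial estimate on $\Re a>3$, where $\Re p>0$ and the defect is controlled.

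\textbf{Step 2: convergence of truncations.} Next we show that the truncated solutions converge, locally uniformly in $a\in\Omega$ and uniformly in $x\ge1$ in a suitable weighted norm, as $N\to\infty$. For real $a$ the limit is identified with $\psi\boldsymbol{\mathfrak q}$ using Lemma~\ref{lem:uniform-modes}, which gives uniqueness of bounded rapidly decaying solutions. The boundary conditions at $x=1$ need care: for complex $a$ we take initial data from the small-$x$ entrance behaviour $\mathfrak q_n\to1$, realised through the truncated system started at $x\downarrow0$, or from the $x=1$ data of a holomorphically defined series. The cleanest version is to start at $x=\epsilon$ with data $\psi_{|n|}(a)$ and let $\epsilon\to0$.

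\textbf{Step 3: the amplitude as a holomorphic limit.} We then repeat the proof of Proposition~\ref{prop:real-asymptotics}, Step~2, for the weighted system with complex $a$. The corrections $\mathbf Q$ and $\mathbf T$ are rational in $a$, and the estimate \eqref{eq:scalar-damping} holds with $|(t/x)^{pn^2}|=(t/x)^{\Re p\,n^2}$. This gives
\[
\psi_1(a)\,\ee^{-\ii x}x^{p}\Big(\mathfrak q_1-\tfrac{\ii(a+1)}{ax}\Big)\longrightarrow\psi_1(a)\,\mathfrak U(a)/t_1(a),
\]
where the convergence rate is integrable, the integrand is $O(|x^{p-2-\mu}|)$, and everything is locally uniform in $a$. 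A uniform limit of holomorphic functions is holomorphic. Since $\psi_1(a)/t_1(a)$ is a nonvanishing multiple of $\psi_0(a)$ (indeed $\psi_1/\psi_0=-a/(a+1)=t_1$), this yields the extension of $\psi_0\mathfrak U$. The polynomial bound \eqref{eq:amplitude-growth} follows by tracking the $|a|$-dependence: the matrices $\mathbf T,\mathbf F$ and the vector $\mathbf g$ are polynomial in $a$ and $1/a$, and the Step~1 energy constants are polynomial.

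The main obstacle is Step~1. We need a uniform, parameter-polynomial a priori bound for the infinite hierarchy without probability, with the weights $\psi_n(a)$ making the system nearly dissipative for complex $a$. The first thing to try is the quadratic form $\sum_n|\mathfrak v_n|^2/|\psi_{|n|}(a)|$ with real-$a$-inspired weights. We estimate its derivative by pairing the damping $-\Re p\,n^2/x$ against the off-diagonal terms by Cauchy--Schwarz. If the real parts of the off-diagonal contributions do not cancel exactly, we absorb them using $\Re a>3$.
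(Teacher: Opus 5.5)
\textbf{Step 1 fails.} Your architecture is the right one: weight the modes by $\psi_n(a)$, truncate, prove $N$-uniform energy bounds, and pass to a holomorphic limit. But Step~1 as you envisage it cannot be carried out, and Steps~2--3 depend on it.

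In the weighted variables $z_n=\psi_n\mathfrak q_n$, $n\ge1$, the neighbour coefficients of $\mathbf H_a$ are $n(n+a+1)/a=n+n(n+1)/a$ and $n(n-a-1)/a$. These have size $n$, not $O(n^2/|a|)$. Their large parts cancel only up to an $a$-independent remainder: for finitely supported $\mathbf u$,
\[
\Re(\mathbf u,\mathbf H_a\mathbf u)
=-\sigma\sum_{n\ge1}n(n+1)|u_n-u_{n+1}|^2-\Re\sum_{n\ge1}\overline{u_n}u_{n+1},
\qquad \sigma=\Re(1/a).
\]
Take $u_n=(-1)^nv_n$ with $v$ a slowly varying bump supported on $n\ll\sigma^{-1/2}$. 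Then the right-hand side is close to $+\|\mathbf u\|^2$, so $\frac{\dd}{\dd x}\|\mathbf u\|^2$ can be nearly $2\|\mathbf u\|^2/x$ when $\sigma$ is small. The damping $\Re p=2\sigma$ cannot dominate this term. Restricting to $\Re a>3$, where $\sigma<1/3$, makes things worse, not better. Consequently no energy argument gives bounds uniform in $x\ge1$.

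What is true, and what the paper proves using the Dirichlet form $\mathcal D_R$ and interpolation in $\sigma$, is the growth bound $\|\mathcal V_{a,R}(x,t)\|\le(x/t)^\kappa$ with $\kappa=1-\frac{33}{16}\sigma$. The delicate point is the margin $\kappa+\Re p=1-\sigma/16<1$.

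Your alternative form $\sum_n|\mathfrak v_n|^2/|\psi_{|n|}(a)|$ is worse still. Since $\psi_n(m)=0$ for integers $m<n$, the consecutive weight ratios $|n+a+1|/|n-a|$ blow up near every integer $a$, which is exactly where you need local uniformity.

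\textbf{Consequences for Steps 2 and 3, and the fix.} Only $\|\mathbf z_R(x)\|\lesssim x^\kappa$ is available, so Step~3 cannot repeat the proof of Proposition~\ref{prop:real-asymptotics}. That proof starts from $\mathfrak r_n=O(x^{-\mu})$ with polynomial weights in $n$, which came from Lemma~\ref{lem:uniform-modes} and has no complex-$a$ substitute. An $\ell^2$ bound on $\psi_n\mathfrak q_n$ gives no decay of $\mathfrak q_n$ in $n$.

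The missing idea is to correct only the first mode: $\widetilde z_{R,1}=z_{R,1}+\frac{\ii}{x}[\psi_0+(1+p)z_{R,2}]$. Then $\frac{\dd}{\dd x}(\ee^{-\ii x}x^p\widetilde z_{R,1})$ equals $\ee^{-\ii x}x^{p-2}$ times $\ii(p-1)\psi_0$ plus a combination of $z_{R,1},z_{R,2},z_{R,3}$. The explicit term is handled by integration by parts. The rest is integrable uniformly in $R$ exactly because $x^{\Re p-2}x^\kappa=x^{-1-\sigma/16}$.

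Your Step~2, convergence of the truncations for complex $a$ uniformly in $x$, is likewise unsupported, and it is also unnecessary. It suffices that the truncated amplitudes $\mathcal A_R$ are holomorphic, locally uniformly bounded, and convergent for real $a>1$; Vitali's theorem then gives the extension. Convergence for real $a$ comes from comparing with the physical moments via $|\mathfrak q_{R+1}-\mathfrak q_R|\le x$ and $|\psi_{R+1}(a)|\le C_aR^{-2a-1}$.

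\textbf{The entrance at $x=0$.} The entrance needs a truncation that preserves the balance $\mathbf H_a\boldsymbol\psi+\mathbf b=0$; the paper achieves this by replacing the last diagonal entry with $-R^2/a-R$. With your Dirichlet truncation, the last row leaves a forcing $-\frac{R(R+a+1)}{a}\psi_{R+1}\mathbf e_R/t$. With only $(x/t)^\kappa$ control, solutions started from $\boldsymbol\psi$ at $x=\epsilon$ then have no controlled limit as $\epsilon\downarrow0$.
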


\begin{proof}
Put $p=2/a$ and $\sigma=\Re(1/a)\in(0,1)$ for $a\in\Omega$.
We will construct holomorphic functions $\mathcal A_R$ on $\Omega$
which are locally uniformly bounded in $R$, converge to
$\psi_0(a)\mathfrak U(a)$ for every real $a>1$, and satisfy
$|\mathcal A_R(a)|\le C(1+|a|)^5$ on $\Re a>3$, with $C$
independent of $R$. Vitali's theorem will then give the required
holomorphic extension, and the polynomial bound will pass to the
limit. The functions $\mathcal A_R$ will be obtained as large-$x$
limits of finite systems. Uniformity of these limits in $R$ will
allow us to identify their limit with the physical amplitude.

\textbf{Step 1. Finite systems and the energy estimate.}
We first construct finite solutions which are holomorphic in $a$
and satisfy estimates independent of the number of coordinates.
We work with the positive modes, whose equations form a closed
system once the fixed zeroth mode $\mathfrak q_0=1$ is treated
as an explicit forcing. These modes suffice because the amplitude
$\mathfrak U(a)$ is defined through the first moment. Set
$\boldsymbol\psi(a)=(\psi_n(a))_{n\ge1}$ for $a\in\Omega$ and,
for real $a>1$, set
$\mathbf z(x;a)=(\psi_n(a)\mathfrak q_n(x;a))_{n\ge1}$.
The limits $\mathfrak q_n(x;a)\to1$ as $x\downarrow0$ give
$z_n(x;a)\to\psi_n(a)$.
We therefore call $\boldsymbol\psi(a)$ the entrance vector:
it specifies the limiting values at the singular endpoint $x=0$.
For complex $a\in\Omega$, the same explicit vector will supply
the limiting values for the finite systems constructed below.
We use the following norm and inner product conventions:
\[
\begin{gathered}
\left\|\mathbf u\right\|_{\ell^2\left(\N\right)}^2
\defeq\sum_{n=1}^{\infty}\left|u_n\right|^2,
\qquad
\left\|\mathbf v\right\|_{\C^R}^2
\defeq\sum_{n=1}^{R}\left|v_n\right|^2,
\\
\left(\mathbf u,\mathbf v\right)_{\C^R}
\defeq\sum_{n=1}^{R}\overline{u_n}v_n,
\qquad
\left\|\mathbf A\right\|_{\C^R\to\C^R}
\defeq
\sup_{\substack{\mathbf u\in\C^R\\
                \left\|\mathbf u\right\|_{\C^R}=1}}
\left\|\mathbf A\mathbf u\right\|_{\C^R}.
\end{gathered}
\]

The Gamma recurrence and \eqref{eq:q-hierarchy} give, coordinatewise,
\begin{equation}\label{eq:weighted-hierarchy}
\begin{gathered}
\mathbf z'
=\left(\ii\mathcal N+\frac{\mathbf H_a}{x}\right)\mathbf z
+\frac{\mathbf b}{x},
\qquad
\mathbf b\left(a\right)\defeq-\psi_0\left(a\right)\mathbf e_1,
\\
\left(\mathbf H_a\mathbf u\right)_n
=-\frac{2n^2}{a}u_n
+\frac{n\left(n-a-1\right)}a u_{n-1}
+\frac{n\left(n+a+1\right)}a u_{n+1},
\qquad n\ge1,
\end{gathered}
\end{equation}
where $u_0=0$ and $\mathbf e_1$ is the first coordinate vector.
To control the entrance vector, write
\[
f_a\left(\theta\right)\defeq
\left|\sin\left(\frac\theta2\right)\right|^{2a},
\qquad
\psi_n\left(a\right)
=\frac1{2\pi}\int_0^{2\pi}
f_a\left(\theta\right)\ee^{-\ii n\theta}\,\dd\theta,
\qquad n\ge0.
\]
The power uses the real logarithm of $|\sin(\theta/2)|$.
These Fourier integrals are holomorphic in $a\in\Omega$.
Moreover, $|f_a|\le1$, and on $0<\theta<2\pi$,
\[
f_a'\left(\theta\right)
=a\cos\left(\frac\theta2\right)
\sin\left(\frac\theta2\right)^{2a-1}.
\]
Thus $|f_a'|\le|a|$ on $\Omega$.
Parseval's identity, applied to $f_a$ and its periodic derivative,
gives
\begin{equation}\label{eq:entrance-fourier-bounds}
\begin{gathered}
\left\|\boldsymbol\psi\left(a\right)\right\|_{\ell^2\left(\N\right)}^2
\le\frac1{2\pi}\int_0^{2\pi}
\left|f_a\left(\theta\right)\right|^2\,\dd\theta
\le1, \ \ 
\left\|\mathcal N\boldsymbol\psi\left(a\right)\right\|_{\ell^2\left(\N\right)}^2
\le\frac1{2\pi}\int_0^{2\pi}
\left|f_a'\left(\theta\right)\right|^2\,\dd\theta
\le\left|a\right|^2.
\end{gathered}
\end{equation}
The same Fourier integral gives $|\psi_0(a)|\le1$.

Let $\mathbf P_R$ be the projection onto $1\le n\le R$,
where $R\ge3$, and identify its range with $\C^R$.
Write $\mathbf I$ for the identity on $\C^R$, and use
$\mathcal N$ also for its restriction
$\operatorname{diag}(1,\ldots,R)$.
In finite systems, $\mathbf b$ denotes its restriction to these
coordinates.

The entrance vector satisfies
$\mathbf H_a\boldsymbol\psi+\mathbf b=0$ coordinatewise.
We choose the finite system so that $\mathbf P_R\boldsymbol\psi$
satisfies the same balance equation.
An ordinary truncation drops the last-row term
$R(R+a+1)\psi_{R+1}/a$.
The recurrence
$\psi_{R+1}=(R-a)\psi_R/(R+a+1)$ allows us to replace this
missing term by $R(R-a)\psi_R/a$.
Accordingly, let $\mathbf H_{a,R}$ be the $R\times R$
truncation of $\mathbf H_a$ with its last diagonal entry
replaced by
\begin{equation}\label{eq:preserving-truncation}
\left(\mathbf H_{a,R}\right)_{R,R}
\defeq-\frac{R^2}{a}-R.
\end{equation}
This modification gives
\begin{equation}\label{eq:finite-entrance-balance}
\mathbf H_{a,R}\mathbf P_R\boldsymbol\psi+\mathbf b=0.
\end{equation}

We claim the energy estimate
\begin{equation}\label{eq:nonzero-energy}
\Re\left(\mathbf u,\mathbf H_{a,R}\mathbf u\right)_{\C^R}
\le
\kappa\left\|\mathbf u\right\|_{\C^R}^2,
\qquad
\kappa\defeq1-\frac{33}{16}\sigma<1.
\end{equation}
The inequalities $\kappa<1$ and $\kappa+2\sigma<1$ will
control, respectively, the entrance integral at zero and the
weighted integral at infinity. To prove the estimate, put
\[
\mathcal D_R\left(\mathbf u\right)\defeq
\sum_{n=1}^{R-1}
n\left(n+1\right)\left|u_n-u_{n+1}\right|^2
+
2R\left|u_R\right|^2.
\]
Telescoping and Cauchy--Schwarz give
$|u_1|^2\le\mathcal D_R(\mathbf u)$.
Telescoping $\|\mathbf u\|_{\C^R}^2$ and applying Young's
inequality also gives
\[
\begin{aligned}
\left\|\mathbf u\right\|_{\C^R}^2
&\le
\frac12\left\|\mathbf u\right\|_{\C^R}^2
+
2\sum_{n=1}^{R-1}
n\left(n+1\right)\left|u_n-u_{n+1}\right|^2
+
R\left|u_R\right|^2
\le
\frac12\left\|\mathbf u\right\|_{\C^R}^2
+
2\mathcal D_R\left(\mathbf u\right).
\end{aligned}
\]
Thus $\mathcal D_R(\mathbf u)\ge\|\mathbf u\|_{\C^R}^2/4$.
The Hermitian part of $\mathbf H_{a,R}$ depends affinely on
$\sigma$. At $\sigma=0$, its quadratic form is
\[
-\Re\sum_{n=1}^{R-1}\overline{u_n}u_{n+1}
-R\left|u_R\right|^2
\le
\left\|\mathbf u\right\|_{\C^R}^2.
\]
At $\sigma=1$, it is
$-\mathcal D_R(\mathbf u)
-\Re\sum_{n=1}^{R-1}\overline{u_n}u_{n+1}$.
Expanding the adjacent products, and using
$|u_1|^2\le\mathcal D_R(\mathbf u)$ together with
\[
\left|u_R\right|^2
+
\sum_{n=1}^{R-1}\left|u_n-u_{n+1}\right|^2
\le
\frac12\mathcal D_R\left(\mathbf u\right),
\]
gives
\[
\begin{aligned}
-\mathcal D_R\left(\mathbf u\right)
-\Re\sum_{n=1}^{R-1}\overline{u_n}u_{n+1}
&\le
-\left\|\mathbf u\right\|_{\C^R}^2
-\frac14\mathcal D_R\left(\mathbf u\right)
\le
-\frac{17}{16}\left\|\mathbf u\right\|_{\C^R}^2.
\end{aligned}
\]
Interpolation between $\sigma=0$ and $\sigma=1$ proves
\eqref{eq:nonzero-energy}.

For $0<t\le x$, let $\mathcal V_{a,R}(x,t)$ be the solution
operator on $\C^R$, defined by
\[
\partial_x\mathcal V_{a,R}\left(x,t\right)
=\left(\ii\mathcal N+\frac{\mathbf H_{a,R}}x\right)
\mathcal V_{a,R}\left(x,t\right),
\qquad
\mathcal V_{a,R}\left(t,t\right)=\mathbf I.
\]
Thus $\mathcal V_{a,R}(x,t)$ maps the value of a solution at
$t$ to its value at $x$. For fixed $R,x,t$, it is holomorphic
in $a\in\Omega$, by holomorphic dependence on parameters for
finite-dimensional linear differential equations.

To bound its operator norm, fix $\mathbf u_t\in\C^R$ and put
$\mathbf u(x)=\mathcal V_{a,R}(x,t)\mathbf u_t$.
Since $\ii\mathcal N$ is skew-adjoint,
\eqref{eq:nonzero-energy} gives
\[
\frac{\dd}{\dd x}\left\|\mathbf u\left(x\right)\right\|_{\C^R}^2
=\frac2x\Re\left(\mathbf u\left(x\right),
\mathbf H_{a,R}\mathbf u\left(x\right)\right)_{\C^R}
\le\frac{2\kappa}{x}
\left\|\mathbf u\left(x\right)\right\|_{\C^R}^2.
\]
Integrating and taking the supremum over
$\|\mathbf u_t\|_{\C^R}=1$ yields
\begin{equation}\label{eq:finite-evolution-bound}
\left\|\mathcal V_{a,R}\left(x,t\right)\right\|_{\C^R\to\C^R}
\le\left(\frac xt\right)^\kappa,
\qquad 0<t\le x.
\end{equation}

We now construct the finite approximation for every $x>0$:
\begin{equation}\label{eq:finite-entrance}
\begin{gathered}
\mathbf z_R\left(x;a\right)\defeq
\mathbf P_R\boldsymbol\psi\left(a\right)
+\ii\int_0^x\mathcal V_{a,R}\left(x,t\right)
\mathbf P_R\mathcal N\boldsymbol\psi\left(a\right)\,\dd t,
\\
\left\|\mathbf z_R\left(x;a\right)
-\mathbf P_R\boldsymbol\psi\left(a\right)\right\|_{\C^R}
\le\frac{x}{1-\kappa}
\left\|\mathcal N\boldsymbol\psi\left(a\right)\right\|_{\ell^2\left(\N\right)}.
\end{gathered}
\end{equation}
The bound follows by integrating $(x/t)^\kappa$ over $(0,x)$.
On compact subsets of $\Omega$, $1-\kappa=33\sigma/16$ has a
positive lower bound. The integral therefore converges locally
uniformly in $a$, so $\mathbf z_R(x;a)$ is holomorphic on $\Omega$.
By \eqref{eq:finite-entrance-balance}, differentiation gives
\begin{equation}\label{eq:finite-weighted-system}
\partial_x\mathbf z_R\left(x;a\right)
=\left(\ii\mathcal N+\frac{\mathbf H_{a,R}}x\right)
\mathbf z_R\left(x;a\right)
+\frac{\mathbf b\left(a\right)}x,
\qquad x>0.
\end{equation}

\textbf{Step 2. Uniform estimates at infinity.}
The first-mode amplitude equation in Step~3 will have a remainder
bounded by a constant times
$x^{2\sigma-2}\|\mathbf z_R(x;a)\|_{\C^R}$.
We now prove that this quantity is integrable, with tails tending
to zero uniformly in $R$ and locally uniformly in $a$.
Remove the explicit forced term by setting
\[
\mathbf c\left(a\right)\defeq
-\ii\psi_0\left(a\right)\mathbf e_1,
\qquad
\mathbf w_R\left(x;a\right)\defeq
\mathbf z_R\left(x;a\right)-\frac{\mathbf c\left(a\right)}x.
\]
Here $\mathbf c$ is regarded as a vector in $\C^R$.
Since $\ii\mathcal N\mathbf c+\mathbf b=0$, we have
\[
\partial_x\mathbf w_R
=
\left(\ii\mathcal N+\frac{\mathbf H_{a,R}}x\right)\mathbf w_R
+
\frac{\left(\mathbf H_{a,R}+\mathbf I\right)\mathbf c}{x^2}.
\]
The vector $(\mathbf H_{a,R}+\mathbf I)\mathbf c$ is supported
on the first two coordinates, and
\[
\left\|
\left(\mathbf H_{a,R}+\mathbf I\right)\mathbf c
\right\|_{\C^R}
\le C
\]
uniformly in $R\ge3$ and $a\in\Omega$, since $|p|<2$ and
$|\psi_0(a)|\le1$.
Variation of constants gives
\begin{equation}\label{eq:complex-duhamel-bound}
\left\|\mathbf w_R\left(x;a\right)\right\|_{\C^R}
\le
x^\kappa
\left\|\mathbf w_R\left(1;a\right)\right\|_{\C^R}
+
C\int_1^x\left(\frac xt\right)^\kappa t^{-2}\,\dd t,
\qquad x\ge1.
\end{equation}
Since $1-2\sigma-\kappa=\sigma/16$, Tonelli's theorem yields
\begin{equation}\label{eq:integrated-complex-bound}
\begin{aligned}
\int_1^\infty
x^{2\sigma-2}
\left\|\mathbf z_R\left(x;a\right)\right\|_{\C^R}\,\dd x
&\le
\frac{16}{\sigma}
\left[
\left\|\mathbf w_R\left(1;a\right)\right\|_{\C^R}
+\frac C{2-2\sigma}
\right]
+\frac C{2-2\sigma},
\\
\left\|\mathbf w_R\left(1;a\right)\right\|_{\C^R}
&\le
C\left[1+\frac{1+\left|a\right|}{\sigma}\right].
\end{aligned}
\end{equation}
The second inequality follows from
\eqref{eq:finite-entrance} and
\eqref{eq:entrance-fourier-bounds}.

To obtain uniform tails, fix a nonempty compact set $K\subset\Omega$
and put
\[
m_K\defeq\min_{a\in K}\sigma\left(a\right)>0,
\qquad
M_K\defeq\max_{a\in K}\sigma\left(a\right)<1.
\]
For every real $\kappa$ and $x\ge1$,
\[
\begin{aligned}
\int_1^x\left(\frac xt\right)^\kappa t^{-2}\,\dd t
&=\int_1^x x^\kappa t^{-\kappa-1}\,\frac{\dd t}{t}
\le\log x\,\max\left(x^\kappa,x^{-1}\right).
\end{aligned}
\]
Thus \eqref{eq:complex-duhamel-bound}, the locally uniform bound
on $\mathbf w_R(1;a)$, and $\mathbf z_R=\mathbf w_R+\mathbf c/x$
give
\[
\left\|\mathbf z_R\left(x;a\right)\right\|_{\C^R}
\le C_K\left(1+\log x\right)
\left(x^{\kappa\left(a\right)}+x^{-1}\right),
\qquad x\ge1,\quad a\in K.
\]
Using $\kappa(a)+2\sigma(a)=1-\sigma(a)/16$ before taking
extrema over $K$, we obtain
\[
x^{2\sigma\left(a\right)-2}
\left\|\mathbf z_R\left(x;a\right)\right\|_{\C^R}
\le C_K\left(1+\log x\right)
\left(x^{-1-m_K/16}+x^{-3+2M_K}\right).
\]
Both powers on the right have exponent strictly less than $-1$.
This is therefore an integrable majorant independent of $R$ and
$a\in K$, and consequently
\begin{equation}\label{eq:complex-uniform-tails}
\lim_{L\to\infty}
\sup_{\substack{R\ge3\\a\in K}}
\int_L^\infty x^{2\sigma\left(a\right)-2}
\left\|\mathbf z_R\left(x;a\right)\right\|_{\C^R}\,\dd x
=0.
\end{equation}

\textbf{Step 3. Holomorphic finite-system amplitudes.}
We now extract the first-mode amplitude and prove its holomorphy
and polynomial bound, uniformly in $R$. To cancel the leading
interaction with the second mode, introduce the auxiliary expression
\begin{equation}\label{eq:weighted-first-mode}
\widetilde z_{R,1}\left(x;a\right)\defeq
z_{R,1}\left(x;a\right)
+
\frac{\ii}{x}
\left[
\psi_0\left(a\right)
+\left(1+p\right)z_{R,2}\left(x;a\right)
\right].
\end{equation}
This is a first-mode Harris--Lutz correction \cite{HarrisLutz}.
The first two rows of \eqref{eq:finite-weighted-system},
which are unaffected by the boundary modification because
$R\ge3$, give
\begin{equation}\label{eq:weighted-amplitude-derivative}
\begin{gathered}
\frac{\dd}{\dd x}
\left(
\ee^{-\ii x}x^p\widetilde z_{R,1}\left(x;a\right)
\right)
=
\ee^{-\ii x}x^{p-2}
\left[
\ii\left(p-1\right)\psi_0\left(a\right)
+\mathcal J_R\left(x;a\right)
\right],
\\
\mathcal J_R\left(x;a\right)\defeq
\ii\left(1+p\right)
\left[
\left(p-2\right)z_{R,1}\left(x;a\right)
-\left(3p+1\right)z_{R,2}\left(x;a\right)
+\left(3p+2\right)z_{R,3}\left(x;a\right)
\right].
\end{gathered}
\end{equation}
Throughout, $x^p=\exp(p\log x)$ uses the real logarithm.

We integrate the two terms on the right separately.
For the explicit term, one integration by parts gives
\[
\int_1^\infty\ee^{-\ii t}t^{p-2}\,\dd t
=
-\ii\ee^{-\ii}
-\ii\left(p-2\right)
\int_1^\infty\ee^{-\ii t}t^{p-3}\,\dd t.
\]
The last integral converges absolutely locally uniformly on
$\Omega$. Indeed, for every compact $K\subset\Omega$ there
is $\delta_K>0$ such that $\Re p\le2-\delta_K$ on $K$,
and hence $|t^{p-3}|\le t^{-1-\delta_K}$ for $t\ge1$.
The same integration by parts, with lower endpoint $x$,
shows that the tail of the explicit oscillatory integral
tends to zero locally uniformly in $a$.

For the remaining term,
\[
\left|
\ee^{-\ii t}t^{p-2}\mathcal J_R\left(t;a\right)
\right|
\le
C_K t^{2\sigma\left(a\right)-2}
\left\|\mathbf z_R\left(t;a\right)\right\|_{\C^R},
\qquad a\in K.
\]
The integrated bound \eqref{eq:integrated-complex-bound} and
\eqref{eq:complex-uniform-tails} therefore
show that this term has an absolutely convergent integral,
with tails tending to zero uniformly in $R$ and $a\in K$.
Consequently the limits
\[
\mathcal A_R\left(a\right)\defeq
\lim_{x\to\infty}
\ee^{-\ii x}x^p\widetilde z_{R,1}\left(x;a\right)
\]
exist and satisfy
\begin{equation}\label{eq:finite-amplitude-tails}
\lim_{x\to\infty}
\sup_{\substack{R\ge3\\a\in K}}
\left|
\mathcal A_R\left(a\right)
-\ee^{-\ii x}x^p\widetilde z_{R,1}\left(x;a\right)
\right|
=0
\end{equation}
for every compact $K\subset\Omega$.
Each finite-$x$ expression is holomorphic in $a$, so its locally
uniform limit $\mathcal A_R$ is holomorphic on $\Omega$.
The bounds at $x=1$ and the integrated estimates also give
$\sup_{R\ge3,\,a\in K}|\mathcal A_R(a)|<\infty$.

For the polynomial bound, on $\Re a>3$ we have
\[
\sigma<\frac13,
\qquad
\sigma^{-1}=\frac{\left|a\right|^2}{\Re a}
\le\frac{\left|a\right|^2}{3}.
\]
Thus \eqref{eq:integrated-complex-bound} gives, uniformly in $R$,
\[
\left\|\mathbf w_R\left(1;a\right)\right\|_{\C^R}
\le C\left(1+\left|a\right|\right)^3,
\qquad
\int_1^\infty
x^{2\sigma-2}
\left\|\mathbf z_R\left(x;a\right)\right\|_{\C^R}\,\dd x
\le C\left(1+\left|a\right|\right)^5.
\]
The initial value $\widetilde z_{R,1}(1;a)$ is therefore
$O((1+|a|)^3)$.
The explicit oscillatory integral is uniformly bounded on
$\Re a>3$, by its integration-by-parts formula, and the
$\mathcal J_R$ integral is bounded by the last display.
Integrating \eqref{eq:weighted-amplitude-derivative} consequently
gives
\begin{equation}\label{eq:finite-amplitude-growth}
\left|\mathcal A_R\left(a\right)\right|
\le C\left(1+\left|a\right|\right)^5,
\qquad \Re a>3,
\end{equation}
uniformly in $R$.

\textbf{Step 4. Identification with the physical amplitude and analytic continuation.}
It remains to identify the limit of $\mathcal A_R(a)$ for real
$a>1$. We first compare the finite systems with the physical
moments at fixed $x$, and then use the uniform amplitude tails
to pass to the large-$x$ limit.
Fix $a>1$, so $0<p<2$, and put
\[
h\left(x\right)\defeq1-\Re\mathfrak q_1\left(x;a\right)
=\E\left[1-\cos\mathsf\alpha_0\left(x;a\right)\right].
\]
We first obtain a quantitative entrance estimate for the physical
moments. Taking real parts in the first equation of
\eqref{eq:q-hierarchy} gives
\[
\begin{aligned}
h'\left(x\right)
={}&\E\left[\sin\mathsf\alpha_0\left(x;a\right)\right]
-\frac1x\E\left[
\left(1-\cos\mathsf\alpha_0\left(x;a\right)\right)
\left(2+\left(2-p\right)\cos\mathsf\alpha_0\left(x;a\right)\right)
\right].
\end{aligned}
\]
Since $0<p<2$, the second factor in the last expectation is
at least $p$. Also, $\sin^2\alpha\le2(1-\cos\alpha)$.
Cauchy--Schwarz therefore gives
\[
h'\left(x\right)
\le\sqrt{2h\left(x\right)}-\frac p x h\left(x\right)
\le\sqrt{2h\left(x\right)}.
\]
Here $h\ge0$ and $h(x)\to0$ as $x\downarrow0$.
To integrate the inequality at possible zeros of $h$, fix
$\eta>0$ and observe that
$\frac{\dd}{\dd x}\sqrt{h(x)+\eta}\le1/\sqrt2$.
Integrating from $\varepsilon$ to $x$, then letting
$\eta\downarrow0$ and $\varepsilon\downarrow0$, yields
\begin{equation}\label{eq:physical-entrance-rate}
h\left(x\right)\le\frac{x^2}{2},
\qquad x>0.
\end{equation}

For $|z|=1$ and $n\ge1$, telescoping gives
$|z^n-1|\le n|z-1|$.
Together with \eqref{eq:physical-entrance-rate}, this implies
\[
\begin{aligned}
\left|\mathfrak q_n\left(x;a\right)-1\right|
&\le n\E\left[
\left|\ee^{\ii\mathsf\alpha_0\left(x;a\right)}-1\right|
\right]
\le n\sqrt{2h\left(x\right)}\le nx,
\\
\left|\mathfrak q_{n+1}\left(x;a\right)
-\mathfrak q_n\left(x;a\right)\right|
&\le\E\left[
\left|\ee^{\ii\mathsf\alpha_0\left(x;a\right)}-1\right|
\right]
\le x.
\end{aligned}
\]
The first estimate and \eqref{eq:entrance-fourier-bounds} give
\[
\left\|\mathbf z\left(x;a\right)
-\boldsymbol\psi\left(a\right)\right\|_{\ell^2\left(\N\right)}
\le
x\left\|\mathcal N\boldsymbol\psi\left(a\right)\right\|_{\ell^2\left(\N\right)}.
\]
Thus both the finite approximation and the physical weighted
moments approach their entrance vectors at rate $O(x)$.

Define
\[
\mathbf d_R\left(x;a\right)\defeq
\mathbf z_R\left(x;a\right)-\mathbf P_R\mathbf z\left(x;a\right).
\]
The preceding estimate and \eqref{eq:finite-entrance} show that
$\|\mathbf d_R(\varepsilon;a)\|_{\C^R}\le C_a\varepsilon$,
uniformly in $R$. The equations agree in their first $R-1$
rows. In the last row, the identity
$(R-a)\psi_R=(R+a+1)\psi_{R+1}$ gives
\[
\begin{aligned}
\partial_x\mathbf d_R\left(x;a\right)
={}&\left(\ii\mathcal N+\frac{\mathbf H_{a,R}}x\right)
\mathbf d_R\left(x;a\right)
+\frac{R\left(R+a+1\right)}{ax}\psi_{R+1}\left(a\right)
\left[
\mathfrak q_R\left(x;a\right)-\mathfrak q_{R+1}\left(x;a\right)
\right]\mathbf e_R.
\end{aligned}
\]
For fixed real $a>1$, the Gamma formula gives
$|\psi_n(a)|\le C_a(1+n)^{-2a-1}$.
The adjacent-moment estimate above therefore bounds the norm
of the additional forcing by $C_aR^{1-2a}$ for every $x>0$.
At integer $a=m$, this forcing is zero once $R\ge m$.

Fix $X>0$ and $0<\varepsilon<X$.
Variation of constants and \eqref{eq:finite-evolution-bound} give
\[
\begin{aligned}
\left\|\mathbf d_R\left(X;a\right)\right\|_{\C^R}
&\le C_aX^\kappa\varepsilon^{1-\kappa}
+C_aR^{1-2a}\int_\varepsilon^X
\left(\frac Xt\right)^\kappa\,\dd t
\le C_aX^\kappa\varepsilon^{1-\kappa}
+\frac{C_aX}{1-\kappa}R^{1-2a}.
\end{aligned}
\]
Letting $\varepsilon\downarrow0$, and using $\kappa<1$,
shows that $\|\mathbf d_R(X;a)\|_{\C^R}\le C_aXR^{1-2a}$.
In particular, for every fixed $n\ge1$ and $x>0$,
\begin{equation}\label{eq:finite-physical-coordinate-limit}
z_{R,n}\left(x;a\right)
\longrightarrow
\psi_n\left(a\right)\mathfrak q_n\left(x;a\right)
\qquad\text{as }R\to\infty.
\end{equation}

For the physical moments, put
\[
\widetilde z_1\left(x;a\right)\defeq
z_1\left(x;a\right)
+\frac{\ii}{x}
\left[
\psi_0\left(a\right)+\left(1+p\right)z_2\left(x;a\right)
\right].
\]
Since $\psi_1=-a\psi_0/(a+1)=\psi_0t_1$, we have
\[
\begin{aligned}
\widetilde z_1\left(x;a\right)
={}&\psi_1\left(a\right)
\left(
\mathfrak q_1\left(x;a\right)-\frac{\ii\left(a+1\right)}{ax}
\right)
+\frac{\ii\left(1+p\right)\psi_2\left(a\right)}{x}
\mathfrak q_2\left(x;a\right).
\end{aligned}
\]
The amplitude definition \eqref{eq:amplitude-definition} and the
estimate $\mathfrak q_2(x;a)=o(x^{-p})$ established in the proof of
Proposition~\ref{prop:real-asymptotics} therefore give
\[
\ee^{-\ii x}x^p\widetilde z_1\left(x;a\right)
\longrightarrow\psi_0\left(a\right)\mathfrak U\left(a\right)
\qquad\text{as }x\to\infty.
\]
On the other hand, \eqref{eq:finite-physical-coordinate-limit}
gives $\widetilde z_{R,1}(x;a)\to\widetilde z_1(x;a)$ for
each fixed $x>0$. The uniform tail estimate
\eqref{eq:finite-amplitude-tails} makes these limits compatible:
\[
\begin{aligned}
\limsup_{R\to\infty}
\left|
\mathcal A_R\left(a\right)
-\psi_0\left(a\right)\mathfrak U\left(a\right)
\right|
\le{}&
\sup_{R\ge3}
\left|
\mathcal A_R\left(a\right)
-\ee^{-\ii x}x^p\widetilde z_{R,1}\left(x;a\right)
\right|
\\
&+
\left|
\ee^{-\ii x}x^p\widetilde z_1\left(x;a\right)
-\psi_0\left(a\right)\mathfrak U\left(a\right)
\right|.
\end{aligned}
\]
Both terms tend to zero as $x\to\infty$, proving
$\mathcal A_R(a)\to\psi_0(a)\mathfrak U(a)$ for every real $a>1$.

The holomorphic functions $\mathcal A_R$ are locally uniformly
bounded on $\Omega$ and converge on the interval $(1,\infty)$,
which has accumulation points inside $\Omega$.
Vitali's theorem \cite[Theorem~2.1]{ArendtNikolski} therefore
gives locally uniform convergence on $\Omega$ to a holomorphic
function $\mathcal A$ extending $\psi_0(a)\mathfrak U(a)$.
Passing to the limit in \eqref{eq:finite-amplitude-growth}
gives the asserted polynomial bound.
\end{proof}
The integer-parameter asymptotics now determine the oscillatory
coefficient throughout the required range.
\begin{prop}\label{thm:connection}
For every real $a>1$,
\begin{equation}\label{eq:amplitude-evaluation}
 \mathfrak U\left(a\right)=\Gamma\left(1+1/a\right)^2.
\end{equation}
\end{prop}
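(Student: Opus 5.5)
The plan follows the outline in Section~\ref{sec:strategy}. It has three stages: compute $\mathfrak U(m)$ at integers $m\ge4$, build a bounded holomorphic difference function on $\Re a>3$, and apply a Blaschke-type uniqueness argument.

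\textbf{Integer values.} Fix an integer $m\ge4$, so that $\beta=2m$ is an even integer. By \cite[Proposition~2.15 and (21)]{AssiotisNajnudel}, the pair correlation $\mathcal R_m(x)$ coincides with Forrester's Selberg-integral expression for $\boldsymbol\rho^{(2)}_{2m}$, after the change of scale \eqref{eq:normalisation}. Forrester's asymptotics \cite[Proposition~13.2.4]{ForresterBook} then give
\[
\mathcal R_m(x)=1+2\Gamma(1+1/m)^2\cos(x)\,x^{-2/m}+o(x^{-2/m}),
\]
once one checks that the constant $2\Gamma(1+2/\beta)^2(2\pi)^{-4/\beta}$, rewritten in the variable $x=2\pi r$, becomes exactly $2\Gamma(1+1/m)^2$. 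The competing non-oscillatory term is of order $x^{-2}$, which is negligible since $2/m<2$. We compare this with the second line of \eqref{eq:real-asymptotics}. If $\Re(\mathfrak U e^{\ii x})=c\cos x+o(1)$ as $x\to\infty$, then $\mathfrak U=c$, because both $\cos x$ and $\sin x$ fail to tend to zero. Hence $\mathfrak U(m)=\Gamma(1+1/m)^2$ for every integer $m\ge4$.

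\textbf{A bounded holomorphic difference.} Put
\[
G(a)\defeq\psi_0(a)\mathfrak U(a)-\psi_0(a)\Gamma(1+1/a)^2 .
\]
By Lemma~\ref{lem:amplitude-continuation}, the first term is holomorphic on $\Omega$ and satisfies \eqref{eq:amplitude-growth}. For the second term, $\psi_0(a)=\Gamma(2a+1)/(2^{2a}\Gamma(a+1)^2)$ is holomorphic on $\Omega$ and satisfies $|\psi_0(a)|\le1$ there, since it is a Fourier integral of a function bounded by one. The factor $\Gamma(1+1/a)$ is holomorphic and bounded on $\Re a>3$, because there $1/a$ ranges over a bounded set of the disc of radius $1/3$ centred at $0$, away from the poles. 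Consequently $H(a)\defeq G(a)(1+a)^{-M}$ is holomorphic and bounded on the half-plane $\Re a>3$, and it vanishes at every integer $m\ge4$.

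\textbf{Uniqueness and conclusion.} Map the half-plane to the unit disc by $\zeta=(a-4)/(a-2)$, say. The points $a=m$ have images with $1-|\zeta_m|\asymp 1/m$, so $\sum_m(1-|\zeta_m|)=\infty$. A bounded holomorphic function on the disc whose zeros violate the Blaschke condition vanishes identically. This is equivalent to the half-plane Blaschke condition $\sum \Re(a_m-3)/|a_m-2|^2<\infty$, which fails here because $\sum 1/m$ diverges. Therefore $H\equiv0$, and so $G\equiv0$ on $\Re a>3$. By the identity theorem, $G\equiv0$ on all of $\Omega$. Finally, $\psi_0(a)\ne0$ for real $a>1$, since it is the mean of a positive function. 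Dividing by it yields \eqref{eq:amplitude-evaluation}.

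\textbf{Main obstacle.} I expect the main obstacle to be the first step, not the complex analysis. It requires matching Forrester's normalisations (intensity, scaling $r\leftrightarrow x/(2\pi)$, and the phase convention $\cos(2\pi r)$) exactly. It also requires confirming that the identification in \cite{AssiotisNajnudel} holds pointwise for each $x$, not only after integration against test functions. If that pointwise identification is only distributional, I would use continuity of both sides in $x$; the right-hand side is continuous by the absolutely convergent series \eqref{eq:correlation-mode-sum}.
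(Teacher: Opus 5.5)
Your proposal is correct and follows the paper's proof essentially step for step. You obtain the integer values $\mathfrak U(m)=\Gamma(1+1/m)^2$ for $m\ge4$ from Forrester's asymptotics via the identification in \cite{AssiotisNajnudel}, with continuity upgrading almost-everywhere equality to pointwise equality. You then use boundedness of $\psi_0(a)\bigl[\mathfrak U(a)-\Gamma(1+1/a)^2\bigr]/(a+1)^M$ on $\Re a>3$, the same conformal map $(a-4)/(a-2)$ with failure of the Blaschke condition, the identity theorem, and positivity of $\psi_0$. One justification should be stated more precisely: in the coefficient comparison, the relevant fact is that $A\cos x+B\sin x\to0$ forces $A=B=0$ (for instance by evaluating along $x=2\pi k$ and $x=2\pi k+\pi/2$, as the paper does). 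It is not enough that $\cos x$ and $\sin x$ individually fail to tend to zero.
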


\begin{proof}
For $\beta=2m$, Forrester's Selberg integral is the pointwise
scaling limit of the finite circular-ensemble pair correlations
\cite{Forrester1992,ForresterBook}.
Proposition~2.15 of \cite{AssiotisNajnudel} identifies their limit
against continuous compactly supported test functions with the
$\Sine_{2m}$ pair correlation. The uniform bound (21) in its proof
allows dominated convergence, so the Selberg expression agrees
with this pair correlation almost everywhere. Equality holds
everywhere by continuity of both sides, using
\cite[Theorem~1.8]{AssiotisNajnudel}.
The normalisation in \cite{AssiotisNajnudel} has intensity
$1/(2\pi)$, so its pair correlation at $(0,x)$ is
$\mathcal R_m(x)/(4\pi^2)$.
Forrester's asymptotics
\cite[Proposition~13.2.4, pp.~604--605]{ForresterBook}
therefore give, in the present normalisation,
\[
 \mathcal R_m\left(x\right)
 =1+2\Gamma\left(1+1/m\right)^2\cos\left(x\right)x^{-2/m}
       +o\left(x^{-2/m}\right),\qquad m\ge4.
\]
Comparison with Proposition~\ref{prop:real-asymptotics}, along
$x=2\pi k$ and $x=2\pi k+\pi/2$, gives
$\mathfrak U(m)=\Gamma(1+1/m)^2$.

For real $a>1$, put
\[
 \Delta\left(a\right)\defeq
 \psi_0\left(a\right)\left[
       \mathfrak U\left(a\right)-\Gamma\left(1+1/a\right)^2\right].
\]
Lemma~\ref{lem:amplitude-continuation} extends this function
holomorphically to $\Omega$. The Fourier integral gives
$|\psi_0(a)|\le1$, and $\Gamma(1+1/a)^2$ is bounded on $\Omega$.
Consequently
$\Psi(a)=\Delta(a)/(a+1)^M$, with $M$ as in the lemma, is bounded and holomorphic on
$\Omega_3=\{a:\Re a>3\}$, with $\Psi(m)=0$ for all integers
$m\ge4$. The conformal map $w=(a-4)/(a-2)$ sends $\Omega_3$ to
the unit disk and these zeros to $w_m=(m-4)/(m-2)$, for which
\[
 \sum_{m=4}^\infty\left(1-\left|w_m\right|\right)
 =\sum_{m=4}^\infty\frac2{m-2}=\infty.
\]
The Blaschke uniqueness criterion
\cite[discussion preceding Theorem~2.4]{ArendtNikolski} gives
$\Delta=0$ on $\Omega_3$, and the identity theorem extends this
conclusion to $\Omega$. Since $\psi_0(a)>0$ for real $a>1$, this
proves \eqref{eq:amplitude-evaluation}.
\end{proof}

We can finally complete the proof.
\begin{proof}[Proof of Theorem \ref{thm:main}]
For $0<a<1$, Proposition~\ref{prop:real-asymptotics} gives
$\mathcal R_a(x)-1=-2/(ax^2)+o(x^{-2})$.
For $a>1$, the same proposition and Proposition~\ref{thm:connection}
give
\[
 \mathcal R_a\left(x\right)-1
 =2\Gamma\left(1+1/a\right)^2\cos\left(x\right)x^{-2/a}
       +o\left(x^{-2/a}\right).
\]
At $a=1$, the sine-kernel formula gives exactly
\[
 \mathcal R_1\left(x\right)-1
 =\frac{2\left(\cos\left(x\right)-1\right)}{x^2}.
\]
Substituting $a=\beta/2$ and $x=2\pi r$ proves
Theorem~\ref{thm:main}.
\end{proof}

\bibliographystyle{acm}
\bibliography{BKTbib}
\bigskip

\noindent{\sc School of Mathematics, University of Edinburgh, James Clerk Maxwell Building, Peter Guthrie Tait Rd, Edinburgh EH9 3FD, U.K.}\newline
\href{mailto:theo.assiotis@ed.ac.uk}{\small theo.assiotis@ed.ac.uk}

\end{document}